\newtheorem{thm}{Theorem}[section]
\newtheorem{question}[thm]{Question}
\newtheorem{definition}[thm]{Definition}
\newtheorem{lemma}[thm]{Lemma}
\newtheorem{proposition}[thm]{Proposition}
\newtheorem{remark}[thm]{Remark}
\newtheorem{algorithm}[thm]{Algorithm}
\newtheorem*{Quest}{Question}
\newcommand{\A}{\mathbb{A}}
\newcommand{\C}{\mathbb{C}}
\newcommand{\F}{\mathbb{F}}
\newcommand{\Q}{\mathbb{Q}}
\newcommand{\Z}{\mathbb{Z}}
\newcommand{\Qp}{\mathbb{Q}_p}
\newcommand{\Zp}{\mathbb{Z}_p}
\newcommand{\cD}{\mathcal{D}}
\newcommand{\cO}{\mathcal{O}}
\newcommand{\cU}{\mathcal{U}}
\newcommand{\dd}{\mathbf{d}_h}
\newcommand{\fa}{\mathfrak{a}}
\newcommand{\Gal}{\mathrm{Gal}}
\newcommand{\lcm}{\mathrm{lcm}}
\newcommand{\ord}{\mathrm{ord}}
\newcommand{\rank}{\mathrm{rank}}
\DeclareMathSymbol{\mlq}{\mathord}{operators}{``}
\DeclareMathSymbol{\mrq}{\mathord}{operators}{`'}
\title[Shadow lines in the arithmetic of elliptic curves]{Shadow lines in the arithmetic of elliptic curves}
\author{J. S. Balakrishnan}
\address{J. S. Balakrishnan, Mathematical Institute, University of Oxford, Woodstock Road, Oxford OX2 6GG, UK}
\email{balakrishnan@maths.ox.ac.uk}
\author{M. \c{C}iperiani}
\address{M. \c{C}iperiani, Department of Mathematics, The University of Texas at Austin, 1 University Station, C1200 
Austin, Texas 78712, USA}
\email{mirela@math.utexas.edu}
\author{J. Lang}
\address{J. Lang, UCLA Mathematics Department, Box 951555, Los Angeles, CA 90095-1555, USA}
\email{jaclynlang@math.ucla.edu}
\author{B. Mirza}
\address{B. Mirza, Department of Mathematics and Statistics, McGill University, 805 Sherbrooke Street West, Montreal, Quebec, Canada, H3A 0B9}
\email{mirza@math.mcgill.ca}
\author{R. Newton}
\address{R. Newton, Max Planck Institute for Mathematics, Vivatsgasse 7, 53111 Bonn, Germany}
\email{rachel@mpim-bonn.mpg.de}
\date{\today}
\begin{document}

\begin{abstract}
Let $E/\Q$ be an elliptic curve and $p$ a rational prime of good ordinary reduction. For every imaginary quadratic field $K/\Q$ satisfying the Heegner hypothesis for $E$ we have a corresponding line in $E(K)\otimes \Qp$, known as a shadow line. When $E/\Q$ has analytic rank $2$ and $E/K$ has analytic rank $3$, shadow lines are expected to lie in $E(\Q)\otimes \Qp$. If, in addition, $p$ splits in $K/\Q$, then shadow lines can be determined using the anticyclotomic $p$-adic height pairing. We develop an algorithm to compute anticyclotomic $p$-adic heights which we then use to provide an algorithm to compute shadow lines. We conclude by illustrating these algorithms in a collection of examples.
\end{abstract}

\subjclass [2010] {11G05, 11G50, 11Y40.}
\keywords{Elliptic Curve, Universal Norm, Anticyclotomic $p$-adic Height, Shadow Line.}
\thanks{The authors are grateful to the organizers of the conference ``WIN3: Women in Numbers 3'' for facilitating this collaboration\\
\indent and acknowledge the hospitality and support provided by the Banff International Research Station. \\
\indent During the preparation of this manuscript: the second author was partially supported by NSA grant H98230-12-1-0208 and \\
\indent NSF grant DMS-1352598; the third author was partially supported by NSF grant DGE-1144087.}

\maketitle


\section*{Introduction} Fix an elliptic curve $E/\Q$ of analytic rank $2$ and an odd prime $p$ of good ordinary reduction. Assume that the $p$-primary part of the Tate-Shafarevich group of $E/\Q$ is finite.  Let $K$ be an imaginary quadratic field such that the analytic rank of $E/K$ is $3$ and the Heegner hypothesis holds for $E$, i.e., all primes dividing the conductor of $E/\Q$ split in $K$. We are interested in computing the subspace of $E(K) \otimes \Qp$ generated by the anticyclotomic universal norms.  To define this space, let $K_\infty$ be the anticyclotomic $\Zp$-extension of $K$ and $K_n$ denote the subfield of $K_\infty$ whose Galois group over $K$ is isomorphic to $\Z/p^n\Z$.  The module of \textit{universal norms} is defined by
\[
\mathcal{U} = \bigcap_{n \geq 0} N_{K_n/K}(E(K_n) \otimes \Zp),
\]
where $N_{K_n/K}$ is the norm map induced by the map $E(K_n) \to E(K)$ given by $P \mapsto \underset{\sigma\in \mathrm{Gal}(K_n/K)}\sum P^\sigma$.

Consider 
\[
L_K := \cU \otimes \Qp \subseteq E(K) \otimes \Qp.
\]
 
 By work of Cornut \cite{Cornut}, and Vatsal \cite{Vatsal} our assumptions on the analytic ranks of $E/\Q$ and $E/K$ together with the assumed finiteness of the $p$-primary part of the Tate-Shafarevich group of $E/\Q$ imply that $\dim L_K \geq 1$. Bertolini \cite{Bertolini} showed that $\dim L_K = 1$ under certain conditions on the prime $p$. Wiles and \c Ciperiani \cite{CW}, \cite{Ciperiani} have shown that Bertolini's result is valid whenever $\Gal (\Q(E_p)/\Q)$ is not solvable; here $E_p$ denotes the full $p$-torsion of $E$ and $\Q(E_p)$ is its field of definition.
The $1$-dimensional $\Qp$-vector space $L_K$ is known as the \emph{shadow line} associated to the triple $(E,K, p)$. 

Complex conjugation acts on $E(K) \otimes \Qp$, and we consider its two eigenspaces $E(K)^+\otimes \Qp$ and $E(K)^-\otimes \Qp$.
Observe that $E(K)^+\otimes \Qp =E(\Q) \otimes \Qp$. By work of Skinner-Urban \cite{SU}, Nekov\'a\v r \cite{Nekovar}, Gross-Zagier \cite{GZ}, and Kolyvagin \cite{Kolyvagin} we know that
\[
\dim E(K)^+\otimes \Qp \geq 2 \indent \text{and}\indent \dim E(K)^-\otimes \Qp =1.
\]
Then by the Sign Conjecture \cite{MRgrowth} we expect that
\[
L_K \subseteq E(\Q) \otimes \Qp.
\]

Our main motivating question is the following: 
\begin{Quest}[Mazur, Rubin] 
As $K$ varies, we presumably get different shadow lines $L_K$ -- what are these lines, and how are they distributed in $E(\Q) \otimes \Qp$ ? 
\end{Quest}
 
In order to gather data about this question one can add the assumption that $p$ splits in $K/\mathbb{Q}$ and then make use of the \emph{anticyclotomic $p$-adic height pairing} on $E(K)\otimes\mathbb{Q}_p$. It is known that $\mathcal{U}$ is contained in the kernel of this pairing \cite{MTpairing}. In fact, in our situation we expect that $\mathcal{U}$ equals the kernel of the anticyclotomic $p$-adic height pairing. Indeed we have $\dim E(K)^-\otimes \Qp =1$ and the weak Birch and Swinnerton-Dyer Conjecture for $E/\Q$ predicts that $\dim E(\Q) \otimes \Qp=2$ from which the statement about $\cU$ follows by the properties of the anticyclotomic $p$-adic height pairing and its expected non-triviality. (This is discussed in \S \ref{sec:Shadow} in further detail.) Thus computing the anticyclotomic $p$-adic height pairing allows us to determine the shadow line $L_K$.

Let $\Gamma(K)$ be the Galois group of the maximal $\Zp$-power extension of $K$, and let $I(K) = \Gamma(K) \otimes_{\Zp} \Qp$. Identifying $\Gamma(K)$ with an appropriate quotient of the idele class group of $K$, Mazur, Stein, and Tate \cite[\S 2.6]{MST} gave an explicit description of the universal $p$-adic height pairing
\[
( \, , ) : E(K) \times E(K) \to I(K).
\]
One obtains various $\Qp$-valued height pairings on $E$ by composing this universal pairing with $\Qp$-linear maps $I(K) \to \Qp$.  The kernel of such a (non-zero) $\Qp$-linear map corresponds to a $\Zp$-extension of $K$.

In particular, the anticyclotomic $\Zp$-extension of $K$ corresponds to a $\Qp$-linear map $\rho : I(K) \to \Qp$ such that $\rho \circ c = -\rho$, where $c$ denotes complex conjugation. The resulting anticyclotomic $p$-adic height pairing is denoted by $( \, , )_\rho$. One key step of our work is an explicit description of the map $\rho$, see \S \ref{sec:character}. As in \cite{MST}, for $P \in E(K)$ we define the anticyclotomic $p$-adic height of $P$ to be $h_\rho(P) = -\frac{1}{2}(P, P)_\rho$. Mazur, Stein, and Tate \cite[\S2.9]{MST} provide the following formula\footnote{The formula appearing in \cite[\S2.9] {MST} contains a sign error which is corrected here.} for the anticyclotomic $p$-adic height of a point $P \in E(K)$:
\[
h_\rho(P ) = \rho_\pi(\sigma_\pi(P)) - \rho_\pi(\sigma_\pi(P^c)) + \sum_{w \nmid p\infty} \rho_w(d_w(P )),
\]
where $\pi$ is one of the prime divisors of $p$ in $K$ and the remaining notation is defined in \S \ref{sec:Height}. An algorithm for computing $\sigma_\pi$ was given in \cite{MST}. Using our explicit description of $\rho$, in \S \ref{sec:Height} we find a computationally feasible way of determining the contribution of finite primes $w$ which do not divide $p$. This enables us to compute anticyclotomic $p$-adic height pairings.  

We then proceed with a general discussion of shadow lines and their identification in $E(\Q) \otimes \Qp$, see \S \ref{sec:Shadow}. In \S \ref{Algs} we present the algorithms that we use to compute anticyclotomic $p$-adic heights and shadow lines. We conclude by displaying in \S\ref{Examples} two examples of the computation of shadow lines $L_K$ on the elliptic curve ``389.a1" with the prime $p=5$ and listing the results of several additional shadow line computations.

\section{Anticyclotomic character}\label{sec:character}

Let $K$ be an imaginary quadratic field with ring of integers $\cO_K$ in which $p$ splits as $p \cO_K=\pi\pi^c$, where $c$ denotes complex conjugation on $K$. Let $\A^\times$ be the group of ideles of $K$. We also use $c$ to denote the involution of $\A^\times$ induced by complex conjugation on $K$. For any finite place $v$ of $K$, denote by $K_v$ the completion of $K$ at $v$, $\cO_v$ the ring of integers of $K_v$, and $\mu_v$ the group of roots of unity in $\cO_v$. Let $\Gamma(K)$ be the Galois group of the maximal $\Zp$-power extension of $K$. As in \cite{MST}, we consider the idele class $\Qp$-vector space $I(K) =\Gamma(K) \otimes_{\Zp} \Qp $.  By class field theory $\Gamma(K)$ is a quotient of $J' := \A^\times/\overline{K^\times \C^\times \prod_{w \nmid p} \cO_w^\times}$ by its finite torsion subgroup $T$, see the proof of Theorem 13.4 in \cite{Washington}. The bar in the definition of $J'$ denotes closure in the idelic topology, and the subgroup $T$ is the kernel of the $N$-th power map on $J'$ where $N$ is the order of the finite group 
\[
\A^\times/\overline{K^\times \C^\times \prod_{w \nmid p} \cO_w^\times}(1 + \pi\cO_\pi) (1 + \pi^c\cO_{\pi^c}). 
\]
Thus we have
\begin{equation}\label{I(K)}
I(K) = J'/T \otimes_{\Zp}\Qp.
\end{equation} 
We shall use this idelic description of $\Gamma(K)$ in what follows.

\begin{definition}[Anticyclotomic $p$-adic idele class character] \label{anticyclotomic}
An \emph{anticyclotomic} $p$-adic idele class character is a continuous homomorphism
\[
\rho: \A^\times/K^\times\rightarrow\Zp
\]
such that $\rho\circ c = -\rho$.
\end{definition}

\begin{lemma}\label{factoring characters}
Every $p$-adic idele class character 
\[
\rho: \A^\times/K^\times\rightarrow\Zp
\]
factors via the natural projection
\[
\A^\times/K^\times\twoheadrightarrow \A^\times / \Bigl(K^\times\C^\times\prod_{w\nmid p}{\cO_w^\times}\prod_{v\mid p}{\mu_v}\Bigr).
\]
\end{lemma}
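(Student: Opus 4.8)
The plan is to reduce the statement to a collection of purely local vanishing assertions and then reassemble them. Since $\rho$ is by hypothesis defined on $\A^\times/K^\times$, it already kills $K^\times$, so it suffices to show that $\rho$ annihilates the images in $\A^\times$ of the archimedean component $\C^\times$, of $\cO_w^\times$ for each finite place $w\nmid p$, and of $\mu_v$ for $v\in\{\pi,\pi^c\}$. A homomorphism that vanishes on each of a family of subgroups vanishes on the (closed) subgroup they generate, which here is precisely $K^\times\C^\times\prod_{w\nmid p}\cO_w^\times\prod_{v\mid p}\mu_v$; for the infinite product this closure step will use continuity, as explained below. The single guiding principle throughout is that $\Zp$ is a torsion-free, totally disconnected pro-$p$ group, and hence admits no nontrivial continuous homomorphism out of a connected group, a finite group, or a pro-$\ell$ group with $\ell\neq p$.

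First I would dispose of the archimedean and torsion contributions. The group $\C^\times$ is connected, so its continuous image $\rho(\C^\times)$ is a connected subgroup of the totally disconnected group $\Zp$, and is therefore the single point $\{0\}$. For $v\mid p$ we have $K_v\cong\Qp$, so $\mu_v$ is the finite group of roots of unity of $\Qp$; its image lands in the torsion subgroup of $\Zp$, which is trivial. The same finiteness argument will handle the torsion part of the local unit groups at the remaining places.

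For a finite place $w\nmid p$ of residue characteristic $\ell$, I would invoke the standard topological isomorphism $\cO_w^\times\cong\mu_w\times\Z_\ell^{[K_w:\Q_\ell]}$. The torsion factor $\mu_w$ is finite and maps to $0$ as above, while $\Z_\ell^{[K_w:\Q_\ell]}$ is a pro-$\ell$ group with $\ell\neq p$. Any continuous homomorphism from a pro-$\ell$ group to the pro-$p$ group $\Zp$ is trivial: the image is at once pro-$\ell$ and a closed subgroup of $\Zp$, and the only such subgroup is $\{0\}$ since every nonzero closed subgroup of $\Zp$ is isomorphic to $\Zp$, which is not pro-$\ell$. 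Hence $\rho\circ\iota_w=0$ for each coordinate embedding $\iota_w$, i.e. $\rho$ kills each individual $\cO_w^\times$.

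The one genuinely topological point, and the step I would treat most carefully, is passing from the individual factors to the full product $\prod_{w\nmid p}\cO_w^\times$, which is compact but strictly larger than the subgroup generated by the individual coordinate subgroups. I would argue by density: the finite partial products (ideles trivial at all but finitely many $w$) form a dense subgroup, and on such an element $\rho$ evaluates to a finite sum of the vanishing single-place contributions, hence to $0$. Since $\Zp$ is Hausdorff and $\rho$ is continuous, $\rho$ then vanishes on the closure, which is the whole product. Combining this with the vanishing on $\C^\times$, on $\prod_{v\mid p}\mu_v$, and on $K^\times$ shows that $\rho$ annihilates $K^\times\C^\times\prod_{w\nmid p}\cO_w^\times\prod_{v\mid p}\mu_v$, and therefore factors via the claimed natural projection.
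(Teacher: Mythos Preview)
Your proof is correct and is essentially a careful unpacking of the paper's one-line argument, which simply notes that the lemma follows from $\Zp$ being a torsion-free pro-$p$ group. The paper leaves implicit precisely the local vanishing steps you spell out---connectedness of $\C^\times$, finiteness of $\mu_v$ and $\mu_w$, the pro-$\ell$ structure of the principal units at $w\nmid p$, and the density passage to the full product---so the two approaches coincide.
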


\begin{proof}
This is an immediate consequence of the fact that $\Zp$ is a torsion-free pro-$p$ group.
\end{proof}

The aim of this section is to define a non-trivial anticyclotomic $p$-adic idele class character. By the identification \eqref{I(K)}, such a character will give rise to a $\Qp$-linear map $I(K)\rightarrow \Qp$ which cuts out the anticyclotomic $\Zp$-extension of $K$.

\subsection{The class number one case}
We now explicitly construct an anticyclotomic $p$-adic idele class character $\rho$ in the case when the class number of $K$ is $1$. 

Recall our assumption that $p$ splits in $K/\Q$ as $p\cO_K=\pi\pi^c$ and let 
\[
U_\pi = 1 + \pi\cO_\pi \indent \text{and} \indent U_{\pi^c} = 1 + \pi^c\cO_{\pi^c}. 
\]
Define a continuous homomorphism
\[
\varphi : \A^\times \to U_\pi \times U_{\pi^c}
\]  
as follows.  Let $(x_v)_v \in \A^\times$.  Under our assumption that $K$ has class number $1$, we can find $\alpha \in K^\times$ such that 
\[
\alpha x_v \in \cO_v^\times \indent \text{ for all finite }v. 
\]
Indeed, the ideal $\fa_v$ corresponding to the place $v$ is principal, say generated by $\varpi_v \in \cO_K$.  Then take $\alpha = \prod_{v} \varpi_v^{-\ord_v(x_v)}$, where the product is taken over all finite places $v$ of $K$.  We define
\begin{equation} \label{varphi}
\varphi((x_v)_v) = ((\alpha x_\pi)^{p - 1}, (\alpha x_{\pi^c})^{p - 1}).
\end{equation}
Note that since $p$ is split in $K$ we have $\cO_\pi^\times \cong \Zp^\times \cong \mu_{p - 1} \times U_\pi$, and similarly for $\pi^c$.  To see that $\varphi$ is independent of the choice of $\alpha$, we note that any other choice $\alpha' \in K^\times$ differs from $\alpha$ by an element of $\cO_K^\times$.  Since $K$ is an imaginary quadratic field, $\cO_K^\times$ consists entirely of roots of unity.  In particular, under the embedding $K \hookrightarrow K_\pi$ we see that $\cO_K^\times \hookrightarrow \mu_{p - 1}$.  Thus, any ambiguity about $\alpha$ is killed when we raise $\alpha$ to the $(p - 1)$-power.  Therefore, $\varphi$ is well-defined. The continuity of $\varphi$ is easily verified.

\begin{proposition}
Suppose that $K$ has class number $1$. Then the map $\varphi$ defined in \eqref{varphi} induces an isomorphism of topological groups
\[
\A^\times/\Bigl(K^\times\C^\times\prod_{w\nmid p}{\cO_w^\times}\prod_{v\mid p}{\mu_v}\Bigr)\rightarrow U_\pi \times U_{\pi^c}.
\]
\end{proposition}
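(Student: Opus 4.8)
The plan is to show that $\varphi$ descends to a well-defined map on the quotient, and then that the induced map is a bijective, continuous, open homomorphism. First I would check that $\varphi$ kills the subgroup $K^\times \C^\times \prod_{w\nmid p}\cO_w^\times \prod_{v\mid p}\mu_v$. For the principal ideles $K^\times$ this is built into the construction: given $x = \beta \in K^\times$ viewed diagonally, we may take $\alpha = \beta^{-1}$, so $\alpha x_v = 1 \in \cO_v^\times$ for all finite $v$, forcing $\varphi(\beta) = (1,1)$. For $\prod_{w\nmid p}\cO_w^\times$, note that an idele supported away from $p$ by units requires no correction at $\pi$ or $\pi^c$ (we may take $\alpha = 1$ up to roots of unity), so the $\pi$- and $\pi^c$-components already lie in $\cO_\pi^\times,\cO_{\pi^c}^\times$ and, being units mapping into $\mu_{p-1}\times U_\pi$, are annihilated upon raising to the $(p-1)$-power. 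The factors $\prod_{v\mid p}\mu_v$ land in $\mu_{p-1}$ under the isomorphism $\cO_\pi^\times\cong\mu_{p-1}\times U_\pi$ and so are again killed by the $(p-1)$-power. Finally the infinite (archimedean) component $\C^\times$ does not meet the finite places at all, so it is ignored by $\varphi$. Hence $\varphi$ factors through the stated quotient.

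Next I would establish surjectivity, which is the easy direction: given any target element $(u,u')\in U_\pi\times U_{\pi^c}$, I would choose an idele that is $u^{1/(p-1)}$ at $\pi$ and $u'^{1/(p-1)}$ at $\pi^c$ — valid since $U_\pi$ and $U_{\pi^c}$ are uniquely $(p-1)$-divisible, being pro-$p$ groups — and trivial elsewhere; applying $\varphi$ (with $\alpha=1$, since this idele is already integral and unital at all finite places) returns $(u,u')$. The heart of the argument is injectivity of the induced map, equivalently that the kernel of $\varphi$ is exactly the subgroup described. Suppose $\varphi((x_v)_v)=(1,1)$. After multiplying by the $\alpha\in K^\times$ from the construction I may assume every $x_v$ is a finite unit, so the only surviving data are $x_\pi,x_{\pi^c}\in\cO_\pi^\times,\cO_{\pi^c}^\times$ and the archimedean component. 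The condition $(x_\pi^{p-1},x_{\pi^c}^{p-1})=(1,1)$ together with $\cO_\pi^\times\cong\mu_{p-1}\times U_\pi$ forces $x_\pi,x_{\pi^c}\in\mu_{p-1}=\mu_\pi,\mu_{\pi^c}$; combined with the unit condition at all other finite places and the unconstrained archimedean component, this exhibits $(x_v)_v$ as lying in $\C^\times\prod_{w\nmid p}\cO_w^\times\prod_{v\mid p}\mu_v$, which is precisely the subgroup we quotiented by.

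The main obstacle I anticipate is the topological upgrade from an abstract group isomorphism to an isomorphism of topological groups, i.e.\ showing the inverse is continuous (equivalently that $\varphi$ is open). The cleanest route is to invoke compactness: the quotient $\A^\times/(K^\times\C^\times\prod_{w\nmid p}\cO_w^\times\prod_{v\mid p}\mu_v)$ should be shown to be a profinite-by-finitely-generated, in any case compact, topological group, while $U_\pi\times U_{\pi^c}$ is Hausdorff; a continuous bijection from a compact space to a Hausdorff space is automatically a homeomorphism. I would therefore spend the care on verifying compactness of the source — which follows from the compactness of the idele class group modulo the norm-one or degree-zero part, together with the fact that we have quotiented by the full archimedean factor $\C^\times$ and by all finite units away from $p$ — rather than on constructing an explicit continuous inverse by hand. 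Continuity of $\varphi$ itself was already noted in the construction, so once compactness of the source is in place the homeomorphism assertion follows formally.
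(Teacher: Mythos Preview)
Your proof is correct and follows essentially the same route as the paper: both check that the named subgroup lies in $\ker\varphi$, prove surjectivity using that the $(p-1)$-power map is an automorphism of each $U_v$, and identify the kernel exactly by first multiplying into $\prod_v\cO_v^\times$ via some $\alpha\in K^\times$ and then reading off the components at $\pi$ and $\pi^c$. The only divergence is in the topological step: the paper simply asserts that $\varphi$ is open, whereas you appeal to compactness of the source quotient (a quotient of the compact group $\A^1/K^\times$, once $\C^\times$ has absorbed the norm) together with Hausdorffness of the target; either argument works. One small slip worth tidying: for an idele lying in $\prod_{w\nmid p}\cO_w^\times$, the components at $\pi$ and $\pi^c$ are equal to $1$, not merely arbitrary units, so $\varphi$ kills such an idele trivially --- a general element of $\cO_\pi^\times$ is \emph{not} annihilated by the $(p-1)$-power (only its $\mu_{p-1}$-part is).
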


\begin{proof}
For $v\in\{\pi,\pi^c\}$, the $p$-adic logarithm gives an isomorphism $U_v\cong 1+p\Zp\rightarrow p\Zp$. Hence, raising to the power $(p-1)$ is an automorphism on $U_v$ for $v\in\{\pi,\pi^c\}$ and consequently $\varphi$ is surjective. It is easy to see that $K^\times \C^\times \prod_{w \nmid p} \cO_w^\times \subset \ker \varphi$.  Since $\mu_v\cong \F_p^\times$ for $v\in\{\pi,\pi^c\}$, we have $\prod_{v\mid p}{\mu_v}\subset \ker\varphi$.
We claim that $\ker \varphi = K^\times\C^\times\prod_{w \nmid p} \cO_w^\times\prod_{v\mid p}{\mu_v}$.  Let $(x_v)_v \in \ker \varphi$ and let $\alpha \in K^\times$ be such that $\alpha x_v \in \cO_v^\times$ for all finite $v$. 
It suffices to show that $(\alpha x_v)_v\in\C^\times\prod_{w \nmid p} \cO_w^\times\prod_{v\mid p}{\mu_v}$. This is clear: since $(x_v)_v\in\ker\varphi$, we have $\alpha x_v\in\mu_v$ for $v\in\{\pi,\pi^c\}$.

Finally, since $\varphi$ is a continuous open map, it follows that $\varphi$ induces the desired homeomorphism.
\end{proof}

By Lemma \ref{factoring characters} we have reduced the problem of constructing an anticyclotomic $p$-adic idele class character to the problem of constructing a character 
\begin{equation}
\chi : U_\pi \times U_{\pi^c}\to \Zp
\end{equation}
satisfying $\chi \circ c = -\chi$. Note that this last condition implies that $\chi(x, y) = \chi(x/y^c, 1)$.  Explicitly:
\begin{equation}\label{anticyc property}
\chi(x, y) = -\chi \circ c(x, y) = -\chi(y^c, x^c) = -\chi(y^c, 1) - \chi(1, x^c) = -\chi(y^c, 1) + \chi(x, 1) = \chi(x/y^c, 1).
\end{equation}
In other words, $\chi$ factors via the surjection
\begin{eqnarray*}
f_\pi: U_\pi \times U_{\pi^c}\twoheadrightarrow U_\pi\\
(x,y)\mapsto x/y^c.
\end{eqnarray*}
Therefore, it is enough to define a character $U_\pi \to \Zp$.  Fixing an isomorphism of valued fields $\psi: K_\pi\to \Qp$ gives an identification $U_\pi\cong 1+p\Zp$. Now, up to scaling, there is only one choice of character, namely $\log_p :  1 + p \Zp \to p\Zp$. We write $\log_p$ for the unique group homomorphism $\log_p:\Qp^\times\to (\Qp,+)$ with $\log_p(p)=0$ extending $\log_p :  1 + p \Zp \to p\Zp$. The extension to $\Zp^\times$ of the map $\log_p$ is explicitly given by
\[
\log_p(u)=\frac{1}{p-1}\log_p(u^{p-1}).
\]

We choose the normalization $\rho = \frac{1}{p(p - 1)}\log_p\circ\psi \circ f_\pi \circ \varphi$. We summarize our construction of the anticyclotomic $p$-adic idele class character $\rho$ in the following proposition.

\begin{proposition}\label{charOne}
Suppose that $K$ has class number 1. Fix a choice of isomorphism $\psi:K_\pi\to \Qp$. Consider the map $\rho:\A^\times/K^\times\rightarrow \Zp$ such that
\begin{eqnarray*}
\rho((x_v)_v) 
=\frac{1}{p}\log_p\circ\psi\left(\frac{\alpha x_\pi}{\alpha^cx_{\pi^c}^c} \right)
\end{eqnarray*}
where $\alpha\in K^\times$ is such that $\alpha x_v \in \cO_v^\times$ for all finite $v$. Then $\rho$ is the unique (up to scaling) non-trivial anticyclotomic $p$-adic idele class character.
\end{proposition}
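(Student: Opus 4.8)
The plan is to verify that the explicitly given formula for $\rho$ matches the composite $\frac{1}{p(p-1)}\log_p\circ\psi\circ f_\pi\circ\varphi$ constructed above, confirm the three defining properties (it lands in $\Zp$, it is an anticyclotomic idele class character, and it is non-trivial), and finally prove the uniqueness-up-to-scaling claim. The bulk of the work is assembling the pieces already established in the preceding propositions; the genuinely new assertions are the closed formula and uniqueness.

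First I would derive the displayed formula by unwinding the composite. Given $(x_v)_v$, choose $\alpha\in K^\times$ with $\alpha x_v\in\cO_v^\times$ for all finite $v$. Then $\varphi((x_v)_v)=((\alpha x_\pi)^{p-1},(\alpha x_{\pi^c})^{p-1})$, and applying $f_\pi$ gives $(\alpha x_\pi)^{p-1}/((\alpha x_{\pi^c})^{p-1})^c=\bigl(\tfrac{\alpha x_\pi}{\alpha^c x_{\pi^c}^c}\bigr)^{p-1}$. Applying $\psi$ and then $\log_p$, and using the identity $\log_p(u)=\tfrac{1}{p-1}\log_p(u^{p-1})$ recorded just above, the factor of $(p-1)$ is absorbed, so $\tfrac{1}{p(p-1)}\log_p\circ\psi\circ f_\pi\circ\varphi((x_v)_v)=\tfrac{1}{p}\log_p\circ\psi\bigl(\tfrac{\alpha x_\pi}{\alpha^c x_{\pi^c}^c}\bigr)$, which is exactly the stated formula. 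This same unwinding shows $\rho$ is well-defined and continuous: $\varphi$ is well-defined and continuous by the discussion following \eqref{varphi}, $f_\pi$ and $\psi$ are continuous homomorphisms, and $\log_p\circ\psi$ maps $U_\pi\cong 1+p\Zp$ into $p\Zp\subseteq\Zp$, so the normalizing factor $\tfrac{1}{p}$ keeps the image in $\Zp$. That $\rho$ factors through $\A^\times/K^\times$ and in fact through the quotient by $\C^\times\prod_{w\nmid p}\cO_w^\times\prod_{v\mid p}\mu_v$ is precisely the content of the Proposition identifying that quotient with $U_\pi\times U_{\pi^c}$ together with the fact that $\chi=\log_p\circ\psi\circ f_\pi$ is built to factor through $f_\pi$. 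The anticyclotomic property $\rho\circ c=-\rho$ follows from the computation \eqref{anticyc property}, since $f_\pi\circ\varphi$ was arranged so that composing with $c$ sends $(x,y)\mapsto x/y^c$ to its inverse; concretely $c$ swaps the roles of $\pi$ and $\pi^c$ and conjugates, turning $\tfrac{\alpha x_\pi}{\alpha^c x_{\pi^c}^c}$ into its inverse, and $\log_p$ of an inverse is the negative.

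Next I would establish non-triviality and uniqueness. For non-triviality it suffices to exhibit a single idele on which $\rho$ is non-zero: take $(x_v)_v$ with $x_\pi$ a unit whose image $\psi(x_\pi)\in 1+p\Zp$ has non-zero $p$-adic logarithm and all other components trivial; then after choosing $\alpha$ one checks the formula gives a non-zero value. For uniqueness, I would run the reductions in reverse. By Lemma \ref{factoring characters} any $p$-adic idele class character factors through $\A^\times/(K^\times\C^\times\prod_{w\nmid p}\cO_w^\times\prod_{v\mid p}\mu_v)$, which the Proposition identifies with $U_\pi\times U_{\pi^c}$; the anticyclotomic condition forces the character to factor through $f_\pi$ onto $U_\pi$ by the computation in \eqref{anticyc property}; and via $\psi$ the group $U_\pi\cong 1+p\Zp$ admits, up to $\Zp$-scaling, a unique non-trivial continuous homomorphism to $\Zp$, namely $\log_p$. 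This last point is the one I would be most careful about: it rests on the fact that $1+p\Zp$ is a free pro-$p$ $\Zp$-module of rank one (via $\log_p$), so $\mathrm{Hom}_{\mathrm{cont}}(1+p\Zp,\Zp)\cong\Zp$ is generated by $\log_p$, giving uniqueness up to a $\Zp^\times$-scalar, hence up to $\Qp^\times$-scaling after tensoring.

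The main obstacle is conceptual rather than computational: one must be scrupulous that every reduction used in the construction is \emph{reversible} so that it yields genuine uniqueness, not merely existence. In particular the step asserting that the only continuous homomorphisms $1+p\Zp\to\Zp$ are scalar multiples of $\log_p$ is where the structure of $\Zp$ as a torsion-free pro-$p$ group is essential, and it is precisely this torsion-freeness — already invoked in the proof of Lemma \ref{factoring characters} — that guarantees no exotic characters arise. All other steps are formal consequences of the isomorphisms and factorizations established earlier in the section.
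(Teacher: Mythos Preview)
Your proposal is correct and follows the same approach as the paper: unwind the composite $\tfrac{1}{p(p-1)}\log_p\circ\psi\circ f_\pi\circ\varphi$ and use the identity $\log_p(u)=\tfrac{1}{p-1}\log_p(u^{p-1})$ to absorb the $(p-1)$-power, yielding the displayed formula. The paper's proof is simply the two-line version of your first paragraph, with all of your remaining verifications (well-definedness, the anticyclotomic property, non-triviality, uniqueness) subsumed under the phrase ``by our earlier discussion''---which refers to exactly the preceding results you cite.
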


\begin{proof}
Let $\alpha\in K^\times$ be such that $\alpha x_v \in \cO_v^\times$ for all finite $v$. By our earlier discussion and the definition of the extension of $\log_p$ to $\Zp^\times$, we have
\begin{eqnarray*}
\rho((x_v)_v) &=& \frac{1}{p(p - 1)}\log_p\circ \psi\left(\frac{(\alpha x_\pi)^{p - 1}}{(\alpha^cx_{\pi^c}^c)^{p - 1}} \right)\\
&=&\frac{1}{p}\log_p\circ\psi\left(\frac{\alpha x_\pi}{\alpha^cx_{\pi^c}^c} \right).
\end{eqnarray*}
\end{proof}


\subsection{The general case}

There is a simple generalization of the construction of $\rho$ to the case when the class number of $K$ may be greater than one.  Let $h$ be the class number of $K$. We can no longer define the homomorphism $\varphi$ of \eqref{varphi} on the whole of $\A^\times$ because $\cO_K$ is no longer assumed to be a principal ideal domain. However, we can define 
\[
\varphi_h: (\A^\times)^h\to U_\pi\times U_{\pi^c}
\]
in a similar way, as follows. Let $\fa_v$ be the ideal of $K$ corresponding to the place $v$.  Then $\fa_v^h$ is principal, say generated by $\varpi_v \in \cO_K$.  For $(x_v)_v\in\A^\times$ we set $\alpha(v) = {\varpi_v}^{-\ord_v(x_v)}$.  Then $\alpha(v)x_v^h \in \cO_v^\times$ and $\alpha(v) \in \cO_w^\times$ for all $w \neq v$.  Note that $\alpha(v) = 1$ for all but finitely many $v$.  Set $\alpha = \prod_v \alpha(v)$ and observe that $\alpha x_v^h \in \cO_v^\times$ for all $v$.  Then we define $\varphi_h$ by
\begin{equation}\label{varphi_h}
\varphi_h((x_v)_v^h)=((\alpha x_\pi^h)^{p - 1}, (\alpha x_{\pi^c}^h)^{p - 1}).
\end{equation}
Fix an isomorphism $\psi:K_\pi\to \Qp$. As before, we can now use the $p$-adic logarithm to define an anticyclotomic character $\rho:(\A^\times)^h\to \Zp$ by setting 
\[
\rho = \frac{1}{p(p - 1)}\log_p \circ \psi \circ f_\pi \circ \varphi_h.
\]
We extend the definition of $\rho$ to the whole of $\A^\times$ by setting $\rho((x_v)_v)=\frac{1}{h}\rho((x_v)_v^h)$.  

As in Proposition \ref{charOne}, we now summarize our construction of the anticyclotomic $p$-adic idele class character in this more general setting.
\begin{proposition} \label{general character}
Let $h$ be the class number of $K$. Fix a choice of isomorphism $\psi:K_\pi\to \Qp$. Consider the map $\rho:\A^\times/K^\times\rightarrow \frac{1}{h}\Zp$ such that
\begin{eqnarray*}
\rho((x_v)_v)= \frac{1}{hp}\log_p\circ\psi\left(\frac{\alpha x_\pi^h}{\alpha^cx_{\pi^c}^{ch}} \right)
\end{eqnarray*}
 where $\alpha\in K^\times$ is such that $\alpha x_v^h \in \cO_v^\times$ for all finite $v$. Then $\rho$ is the unique (up to scaling) non-trivial anticyclotomic $p$-adic idele class character.
\end{proposition}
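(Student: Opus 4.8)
The plan is to run the argument of Proposition~\ref{charOne} with the homomorphism $\varphi_h$ of \eqref{varphi_h} in place of $\varphi$, keeping careful track of the two features that are new when $h>1$: the auxiliary element $\alpha$ now only makes $h$-th powers integral units (because $\fa_v^h$, not $\fa_v$, is principal), and the character is defined on $(\A^\times)^h$ and then extended to $\A^\times$ by $\rho((x_v)_v)=\frac1h\rho((x_v)_v^h)$.

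First I would verify the displayed formula. Starting from $\rho=\frac{1}{p(p-1)}\log_p\circ\psi\circ f_\pi\circ\varphi_h$ on $(\A^\times)^h$, I use $f_\pi(x,y)=x/y^c$ and the fact that complex conjugation carries $U_{\pi^c}$ into $U_\pi$ to get $f_\pi\circ\varphi_h((x_v)_v^h)=\left(\frac{\alpha x_\pi^h}{\alpha^c x_{\pi^c}^{ch}}\right)^{p-1}$. Applying $\log_p(u^{p-1})=(p-1)\log_p(u)$ and then the renormalization by $\frac1h$ produces $\frac{1}{hp}\log_p\circ\psi(\cdots)$, exactly as in Proposition~\ref{charOne}. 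Since the argument of $\psi$ lies in $\cO_\pi^\times$ and the extended $\log_p$ sends $\Zp^\times$ into $p\Zp$, the value lands in $\frac1h\Zp$, matching the stated target.

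Next I would check that this $\rho$ is a well-defined, continuous, nontrivial anticyclotomic idele class character (continuity being inherited from that of $\varphi_h$, $\psi$, and $\log_p$). Independence of the choice of $\alpha$ (equivalently, of the generators $\varpi_v$ of $\fa_v^h$) follows because any two choices differ by a global unit $\zeta\in\cO_K^\times$, which changes the argument of $\psi$ by the root of unity $\zeta/\zeta^c$, and $\log_p$ annihilates the torsion of $\Qp^\times$. The composite on $(\A^\times)^h$ is a homomorphism and the rule $\rho((x_v)_v)=\frac1h\rho((x_v)_v^h)$ extends it to a homomorphism on the abelian group $\A^\times$; it vanishes on a principal idele $\beta$ because one may take $\alpha=\beta^{-h}$, which makes the argument of $\psi$ equal to $1$. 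The relation $\rho\circ c=-\rho$ is read off the formula: conjugating the idele swaps the roles of $\pi$ and $\pi^c$ and replaces $\alpha$ by $\alpha^c$, inverting the fraction inside $\psi$. Nontriviality is exhibited by the idele equal to $\psi^{-1}(1+p)$ at $\pi$ and $1$ elsewhere.

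The hard part will be uniqueness up to scaling. By Lemma~\ref{factoring characters} every anticyclotomic character $\rho'$ factors through $Q:=\A^\times/\bigl(K^\times\C^\times\prod_{w\nmid p}\cO_w^\times\prod_{v\mid p}\mu_v\bigr)$, and since $\Zp$ is torsion-free the identity $\rho'((x_v)_v^h)=h\,\rho'((x_v)_v)$ shows that $\rho'$ is determined by its restriction to $(\A^\times)^h$. On $h$-th powers the ideals $\fa_v^h$ are principal, so the class-number-one analysis applies: multiplying by a suitable $\alpha\in K^\times$ turns each $h$-th power idele into a unit idele whose class in $Q$ is pinned down by its image in $U_\pi\times U_{\pi^c}$, the anticyclotomic condition forces this to factor through $f_\pi$ as in \eqref{anticyc property}, and the residual datum is a continuous homomorphism $U_\pi\cong 1+p\Zp\to\Zp$, which is a scalar multiple of $\log_p$. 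The genuine obstacle is precisely that for $h>1$ the quotient $Q$ is no longer isomorphic to $U_\pi\times U_{\pi^c}$ — the nontrivial class group obstructs the clean isomorphism available in the class-number-one case — and the remedy is the passage to $h$-th powers, under which principalization of $\fa_v^h$ together with the torsion-freeness of $\Zp$ restores the earlier argument.
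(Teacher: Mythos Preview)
Your proposal is correct and follows the same approach as the paper: the paper offers no explicit proof block for this proposition, instead treating it as a summary of the preceding construction in \S1.2 (define $\varphi_h$ on $h$-th powers, set $\rho=\frac{1}{p(p-1)}\log_p\circ\psi\circ f_\pi\circ\varphi_h$, and extend by $\rho((x_v)_v)=\frac{1}{h}\rho((x_v)_v^h)$), with the formula simplification mirroring the short proof of Proposition~\ref{charOne}. Your write-up fills in the details the paper leaves implicit---well-definedness via $\cO_K^\times$-ambiguity, the anticyclotomic identity, nontriviality, and especially the uniqueness argument via passage to $h$-th powers and Lemma~\ref{factoring characters}---but the route is the same.
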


\begin{remark}
Note that $\rho:\A^\times/K^\times\rightarrow \frac{1}{h}\Zp$, so if $p\mid h$ then $\rho$ is not strictly an anticyclotomic idele class character in the sense of Definition \ref{anticyclotomic}. However, the choice of scaling of $\rho$ is of no great importance since our purpose is to use $\rho$ to define an anticyclotomic height pairing on $E(K)$ and compute the kernel of this pairing.
\end{remark}

\begin{remark} \label{alpha}
The ideal $\prod_v \fa_v^{-h\, \ord_v(x_v)}$ is  principal and a generator of this ideal is the element $\alpha \in K$ that we use when evaluating the character $\rho$ defined in Proposition \ref{general character}.
\end{remark}


\section{Anticyclotomic $p$-adic height pairing}\label{sec:Height}

We wish to compute the anticyclotomic $p$-adic height $h_\rho$ using our explicit description of the anticyclotomic idele class character $\rho$ given in Proposition \ref{general character}.  For any finite prime $w$ of $K$, the natural inclusion $K_w^\times\hookrightarrow \A^\times$ induces a map $\iota_w : K_w^\times \rightarrow I(K)$, and we write $\rho_w = \rho \circ \iota_w$. For every finite place $w$ of $K$  and every non-zero point $P \in E(K)$ we can find $d_w(P) \in \cO_w$ and $a_w(P), b_w(P) \in \cO_w$, each relatively prime to $d_w(P)$, such that
\begin{equation}\label{denom}
\left(\iota_w(x(P)), \iota_w(y(P)) \right) = \left(\frac{a_w(P)}{d_w(P)^2}, \frac{b_w(P)}{d_w(P)^3} \right).
\end{equation}
We refer to $d_w(P)$ as a \textit{local denominator} of $P$ at $w$.  The existence of $d_w(P)$ follows from the Weierstrass equation for $E$ and the fact that $\cO_w$ is a principal ideal domain.  Finally, we let $\sigma_\pi$ denote the $\pi$-adic $\sigma$-function of $E$.

Given a non-torsion point $P \in E(K)$ such that
\begin{itemize}
\item $P$ reduces to $0$ modulo primes dividing $p$, and
\item $P$ reduces to the connected component of all special fibers of the Neron model of $E$, 
\end{itemize}
we can compute its anticyclotomic $p$-adic height using the following formula\footnote{ The formula appearing in \cite[\S2.9] {MST} contains a sign error which is corrected here.} \cite[\S2.9] {MST} :
\begin{equation}\label{eq:height}
h_\rho(P) = \rho_\pi(\sigma_\pi(P)) - \rho_\pi(\sigma_\pi(P^c)) + \sum_{w \nmid p\infty} \rho_w(d_w (P )).
\end{equation}

In the following lemmas, we make some observations which simplify the computation of $h_{\rho}(P)$.

\begin{lemma} \label{units die}
Let $w$ be a finite prime such that $w\nmid p$. Let $x_w\in K_w^\times$. Then $\rho_w(x_w)$ only depends on $\ord_w(x_w)$. In particular, if $x_w\in\cO_w^\times$, then $\rho_w(x_w)=0$.
\end{lemma}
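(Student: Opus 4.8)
The plan is to deduce both statements from the explicit formula for $\rho$ in Proposition~\ref{general character}, after first reducing to the ``in particular'' clause. Note that $\rho_w=\rho\circ\iota_w$ is a group homomorphism $K_w^\times\to\frac1h\Zp$, and that for $x_w\in K_w^\times$ the value $\rho_w(x_w)$ is obtained by applying $\rho$ to the idele $(y_v)_v$ with $y_w=x_w$ and $y_v=1$ for $v\neq w$. Since $K_w^\times\cong\varpi_w^{\Z}\times\cO_w^\times$ for any uniformizer $\varpi_w$, writing $x_w=\varpi_w^{\ord_w(x_w)}u$ with $u\in\cO_w^\times$ gives
\[
\rho_w(x_w)=\ord_w(x_w)\,\rho_w(\varpi_w)+\rho_w(u).
\]
Hence, once I show that $\rho_w(u)=0$ for every unit $u\in\cO_w^\times$, the first assertion (that $\rho_w(x_w)$ depends only on $\ord_w(x_w)$) follows at once, and the ``in particular'' clause is exactly the case $\ord_w(x_w)=0$.

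To prove $\rho_w(u)=0$ for $u\in\cO_w^\times$, I would apply the formula of Proposition~\ref{general character} to the idele $(y_v)_v$ with $y_w=u$ and $y_v=1$ otherwise. Every component of this idele is a unit at every finite place, so I may take $\alpha=1$: indeed $\alpha y_v^h=y_v^h\in\cO_v^\times$ for all finite $v$, as required. The key observation is that the hypothesis $w\nmid p$ forces $w\neq\pi,\pi^c$, so that $y_\pi=y_{\pi^c}=1$. The formula then collapses to
\[
\rho_w(u)=\frac{1}{hp}\log_p\circ\psi\!\left(\frac{\alpha\,y_\pi^h}{\alpha^c\,y_{\pi^c}^{ch}}\right)=\frac{1}{hp}\log_p\circ\psi(1)=0.
\]

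I do not expect a genuine obstacle here; the only point requiring care is the legitimacy of the choice $\alpha=1$ and, more generally, the independence of the computation from the choice of $\alpha$. By Remark~\ref{alpha}, an admissible $\alpha$ generates the ideal $\prod_v\fa_v^{-h\,\ord_v(y_v)}$, which here is the unit ideal, so every admissible $\alpha$ lies in $\cO_K^\times$. Since $K$ is imaginary quadratic, $\cO_K^\times$ consists of roots of unity, which embed via $\psi$ into $\mu_{p-1}\subset\Qp^\times$; as $\log_p$ vanishes on $\mu_{p-1}$, the term $\log_p\circ\psi(\alpha/\alpha^c)$ vanishes for any such choice, exactly as in the verification that $\varphi$ is well defined. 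This confirms $\rho_w(u)=0$ and, via the reduction above, completes the argument.
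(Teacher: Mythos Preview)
Your argument is correct and matches the paper's second (``alternatively'') proof: the explicit formula for $\rho$ involves only the components at $\pi$ and $\pi^c$ together with the auxiliary $\alpha$, and $\alpha$ depends only on the valuations of the idele. The paper also records a one-line first proof via Lemma~\ref{factoring characters}: since $\rho$ factors through $\A^\times/\bigl(K^\times\C^\times\prod_{w\nmid p}\cO_w^\times\prod_{v\mid p}\mu_v\bigr)$, the subgroup $\cO_w^\times$ for $w\nmid p$ lies in the kernel automatically, which gives the result without unpacking the formula.
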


\begin{proof}
This follows immediately from Lemma \ref{factoring characters}. Alternatively, note that the auxiliary element $\alpha$ used in the definition of $\rho$ only depends on the valuation of $x_w$. 
\end{proof}

\begin{lemma} \label{char-prime}
Let $w$ be a finite prime of $K$. Then $\rho_{w^c}=-\rho_{w}\circ c$. In particular, if $w=w^c$, then $\rho_w=0$.
\end{lemma}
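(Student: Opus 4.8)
The plan is to deduce both claims from the single defining property $\rho\circ c=-\rho$ of the anticyclotomic character, the only genuine input being the compatibility of the local inclusions $\iota_w$ with the involution $c$ on $\A^\times$. Since $c$ is a field automorphism of $K$, it carries the completion $K_{w^c}$ isomorphically onto $K_w$ and, at the level of ideles, permutes components so that $c\circ\iota_{w^c}=\iota_{w}\circ c$ as maps $K_{w^c}^\times\to\A^\times$ (equivalently: placing an element in the $w^c$-slot and then conjugating equals conjugating first and placing the result in the $w$-slot). Setting up this naturality square carefully—keeping straight which copy of $c$ is the adelic involution and which is the local isomorphism $K_{w^c}\to K_w$—is the one place where the bookkeeping can go wrong, and I expect it to be the main (though not deep) obstacle.

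Granting this, the first statement follows from a one-line computation. For $a\in K_{w^c}^\times$,
\[
(\rho_w\circ c)(a)=\rho\bigl(\iota_w(c(a))\bigr)=\rho\bigl(c(\iota_{w^c}(a))\bigr)=(\rho\circ c)\bigl(\iota_{w^c}(a)\bigr)=-\rho\bigl(\iota_{w^c}(a)\bigr)=-\rho_{w^c}(a),
\]
where the second equality is the compatibility above and the fourth is Definition \ref{anticyclotomic}. Hence $\rho_{w^c}=-\rho_w\circ c$, as desired.

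For the ``in particular'', suppose $w=w^c$. Because $p$ splits as $\pi\pi^c$ with $\pi\neq\pi^c$, no self-conjugate prime can lie above $p$, so $w\nmid p$ and Lemma \ref{units die} applies: $\rho_w$ depends only on $\ord_w$. Since $c$ restricts to an automorphism of the local field $K_w$ and therefore preserves $\ord_w$, we get $\rho_w\circ c=\rho_w$. Combining this with the identity just proved gives $\rho_w=\rho_{w^c}=-\rho_w\circ c=-\rho_w$, so $2\rho_w=0$; as the target $\frac{1}{h}\Zp$ is torsion-free, I conclude $\rho_w=0$.
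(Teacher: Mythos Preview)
Your proof is correct and follows the same approach as the paper's: the paper's proof consists of the single sentence ``This is an immediate consequence of the relations $\rho \circ c = -\rho$ and $c \circ \iota_{\lambda^c} = \iota_\lambda \circ c$,'' which is exactly the compatibility square you set up and unwind. Your treatment of the ``in particular'' clause is more explicit than the paper's---you invoke Lemma~\ref{units die} to see that $\rho_w\circ c=\rho_w$ when $w=w^c$ (via $\ord_w\circ c=\ord_w$), a step the paper leaves to the reader---but this is just filling in detail, not a different argument.
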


\begin{proof}
This is an immediate consequence of the relations $\rho \circ c = -\rho$ and $c \circ \iota_{\lambda^c} = \iota_\lambda \circ c$.
\end{proof}

Lemma \ref{char-prime} allows us to write the formula \eqref{eq:height} for the anticyclotomic $p$-adic height as follows:
\begin{equation}\label{simplified height}
h_{\rho}(P) = \rho_\pi\left(\frac{\sigma_\pi(P)}{\sigma_\pi(P^c)} \right) +\sum_{\substack{\ell = \lambda\lambda^c\\ \ell \neq p}} \rho_\lambda\left(\frac{d_\lambda(P)}{d_{\lambda^c}(P)^c} \right).
\end{equation}

\begin{remark} In order to implement an algorithm for calculating the anticyclotomic $p$-adic height $h_\rho$, we must determine a finite set of primes which includes all the split primes $\ell = \lambda \lambda^c \nmid p$ for which $\rho_\lambda\left(\frac{d_\lambda(P)}{d_{\lambda^c}(P)^c} \right) \neq 0$.  Let $k_\lambda$ be the residue field  of $K$ at $\lambda$ and set $\cD (P )= \prod_{\lambda \nmid p\infty} (\#k_\lambda)^{\ord_\lambda(d_\lambda(P ))}.$
It turns out that $\cD (P )$ can be computed easily from the leading coefficient of the minimal polynomial of the $x$-coordinate of $P$ \cite[Proposition 4.2]{BCS}. 
Observe that $\rho_\lambda\left(\frac{d_\lambda(P)}{d_{\lambda^c}(P)^c} \right) \neq 0$ implies that $\ord_\lambda(d_\lambda(P ))\neq 0$ or $\ord_{\lambda^c}(d_{\lambda^c}(P)) \neq 0$. Hence, the only primes $\ell\neq p$ which contribute to the sum in \eqref{simplified height} are those that are split in $K/\Q$ and divide $\cD( P)$. However, in the examples that we have attempted, factoring $\cD( P)$ is difficult due to its size.
\end{remark}

\medskip

We now package together the contribution to the anticyclotomic $p$-adic height coming from primes not dividing $p$. Consider the ideal $x(P ) \cO_K$ and denote by $\delta(P)\subset \cO_K$ its denominator ideal. Observe that by \eqref{denom} we know that all prime factors of $\delta(P )$ appear with even powers. Fix $\dd(P ) \in \cO_K$ as follows: 
\begin{equation} \label{denom-replacement}
\dd(P )\cO_K =\prod_{\mathfrak{q}}{\mathfrak{q}^{h\, \ord_{\mathfrak{q}}(\delta(P))/2}} 
\end{equation}
where $h$ is the class number of $K$, and the product is over all prime ideals $\mathfrak{q}$ in $\cO_K$.

\begin{proposition} \label{packaging}
Let $P \in E(K)$ be a non-torsion point which reduces to $0$ modulo primes dividing $p$, and to the connected component of all special fibers of the Neron model of $E$.
Then the anticyclotomic $p$-adic height of $P$ is
\[
h_\rho(P)= \frac{1}{p}\log_p\left(\psi\left(\frac{\sigma_\pi(P)}{\sigma_\pi(P^c)} \right)\right)+\frac{1}{hp}\log_p\left(\psi\left(\frac{\dd(P )^c}{\dd( P)}\right)\right),
\]
where $\psi:K_\pi\to \Qp$ is the fixed isomorphism.
\end{proposition}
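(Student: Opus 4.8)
The plan is to recognise the right-hand side of the Mazur--Stein--Tate formula \eqref{eq:height} as the value of $\rho$ on a single idele and then to evaluate it using the explicit description in Proposition \ref{general character}. Since each of $\rho_\pi$ and $\rho_w$ is $\rho$ precomposed with the inclusion $\iota_\pi$, respectively $\iota_w$, and since $\rho_\pi$ is a homomorphism, I would set
\[
J = \iota_\pi\!\left(\frac{\sigma_\pi(P)}{\sigma_\pi(P^c)}\right)\cdot\prod_{w\nmid p\infty}\iota_w(d_w(P)) \in \A^\times,
\]
so that $h_\rho(P) = \rho(J)$. (This is a genuine idele because $d_w(P)\in\cO_w^\times$ for all but finitely many $w$.) Writing $z = \sigma_\pi(P)/\sigma_\pi(P^c)$, the component of $J$ at $\pi$ is $z$, at $\pi^c$ is $1$, and at a finite $w\nmid p$ is $d_w(P)$.

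To evaluate $\rho(J)$ I would choose $\alpha\in K^\times$ with $\alpha J_v^h\in\cO_v^\times$ for every finite $v$, as in Proposition \ref{general character}, which gives
\[
\rho(J) = \frac{1}{hp}\log_p\circ\psi\!\left(\frac{\alpha z^h}{\alpha^c}\right)
\]
since $J_{\pi^c}=1$. The element $\alpha$ is pinned down up to $\cO_K^\times$ by its ideal, and the defining conditions force
\[
\alpha\cO_K = \pi^{-h\,\ord_\pi(z)}\prod_{w\nmid p}w^{-h\,\ord_w(d_w(P))}.
\]
Here I would compute the valuations of the sigma-values from the leading term of the $p$-adic sigma function: since $\sigma_\pi(T)=T+\cdots$ and $T=-x/y$, for a point reducing to $0$ modulo $\pi$ one has $\ord_\pi(\sigma_\pi(P)) = \ord_\pi(d_\pi(P))$, and, using that $E$ is defined over $\Q$ so that $x(P^c)=x(P)^c$, also $\ord_\pi(\sigma_\pi(P^c)) = \ord_{\pi^c}(d_{\pi^c}(P))$. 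Hence $\ord_\pi(z) = \ord_\pi(d_\pi(P)) - \ord_{\pi^c}(d_{\pi^c}(P))$.

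The crux is then a bookkeeping of ideals. Recalling from \eqref{denom-replacement} that $\dd(P)\cO_K = \prod_\mathfrak{q}\mathfrak{q}^{h\,\ord_\mathfrak{q}(d_\mathfrak{q}(P))}$, I would multiply the two ideals and check that the prime-to-$p$ contributions cancel exactly, while the $\pi$- and $\pi^c$-exponents both become $h\,\ord_{\pi^c}(d_{\pi^c}(P))$, so that
\[
\alpha\,\dd(P)\,\cO_K = (\pi\pi^c)^{h\,\ord_{\pi^c}(d_{\pi^c}(P))} = p^{h\,\ord_{\pi^c}(d_{\pi^c}(P))}\cO_K.
\]
Thus $\alpha = p^{h\,\ord_{\pi^c}(d_{\pi^c}(P))}\,u/\dd(P)$ for some $u\in\cO_K^\times$, whence $\alpha/\alpha^c = (u/u^c)\,\dd(P)^c/\dd(P)$. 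Substituting into the displayed formula for $\rho(J)$ and expanding the logarithm, the factor $u/u^c$ is a root of unity and so dies under $\log_p$, leaving
\[
h_\rho(P) = \frac{1}{p}\log_p\!\left(\psi\!\left(\frac{\sigma_\pi(P)}{\sigma_\pi(P^c)}\right)\right) + \frac{1}{hp}\log_p\!\left(\psi\!\left(\frac{\dd(P)^c}{\dd(P)}\right)\right),
\]
as claimed.

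The step I expect to be the main obstacle is the valuation bookkeeping at the primes above $p$: one must pin down $\ord_\pi(\sigma_\pi(P))$ and $\ord_\pi(\sigma_\pi(P^c))$ precisely (this is where the hypothesis that $P$ reduces to $0$ modulo $\pi$ and $\pi^c$ enters) and then verify that their contribution, together with the $p$-part of $\dd(P)$, assembles into a pure power of $\pi\pi^c=(p)$. Because $\log_p(p)=0$, this power of $p$ is invisible, which is precisely what produces the clean two-term formula; the non-$p$ primes, by contrast, cancel formally and require no analytic input.
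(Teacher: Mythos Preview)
Your argument is correct and complete. The organisation differs from the paper's, though the analytic input is identical. The paper treats the two summands of \eqref{eq:height} separately: it evaluates $\rho_\pi\bigl(\sigma_\pi(P)/\sigma_\pi(P^c)\bigr)$ using an auxiliary generator $\alpha$ of $\pi^h$, obtaining the desired $\frac{1}{p}\log_p$ term plus a correction $\frac{1}{hp}\log_p\psi\bigl((\alpha/\alpha^c)^{e_{\pi^c}-e_\pi}\bigr)$; it then evaluates $\sum_{w\nmid p\infty}\rho_w(d_w(P))$ by replacing each $d_w(P)^h$ with $\dd(P)$ via Lemma~\ref{units die}, writing $\dd(P)=\alpha^{e_\pi}(\alpha^c)^{e_{\pi^c}}\beta$, and checking that the resulting correction term cancels the one from the first step. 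You instead assemble a single idele $J$ and apply Proposition~\ref{general character} once, with a single global $\alpha$; the cancellation the paper exhibits explicitly is absorbed into your observation that $\alpha\,\dd(P)$ generates a power of $(p)$, which $\log_p$ annihilates. Your packaging is arguably tidier, since the split-then-recombine step never appears; the paper's version has the mild expository advantage of isolating exactly where Lemma~\ref{units die} enters. One small point worth making explicit in a write-up: your identification $\dd(P)\cO_K=\prod_\mathfrak{q}\mathfrak{q}^{h\,\ord_\mathfrak{q}(d_\mathfrak{q}(P))}$ rests on $\ord_\mathfrak{q}(\delta(P))=2\,\ord_\mathfrak{q}(d_\mathfrak{q}(P))$, which follows from \eqref{denom} but is not literally the definition \eqref{denom-replacement}.
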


\begin{proof}
By \eqref{eq:height} we have
\begin{equation}\label{eq:height2}
h_\rho(P) = \rho_\pi\left(\frac{\sigma_\pi(P)}{\sigma_\pi(P^c)} \right)  + \sum_{w \nmid p\infty} \rho_w(d_w (P )).
\end{equation}
Let $P=(x,y) \in E(K)$. Since $P$ reduces to the identity modulo $\pi$ and $\pi^c$, we have 
\begin{align*}
\ord_{\pi}(x)=-2e_\pi,& \indent \ord_{\pi}(y)=-3e_\pi, \\ 
\ord_{\pi^c}(x)=-2e_{\pi^c},& \indent \ord_{\pi^c}(y)=-3e_{\pi^c},
\end{align*}
for positive integers $e_\pi$ and $e_{\pi^c}$. Since the $p$-adic $\sigma$ function has the form $\sigma(t)=t+\cdots\in t\Zp[[t]]$, we see that 
\begin{eqnarray*}
\ord_\pi(\sigma_\pi(P))=\ord_\pi\left(\sigma_\pi\left(\frac{-x}{y}\right)\right)=\ord_\pi\left(\frac{-x}{y}\right)=e_\pi
\end{eqnarray*}
and similarly
\[\ord_\pi(\sigma_\pi(P^c))=\ord_\pi\left(\frac{-x^c}{y^c}\right)=\ord_{\pi^c}\left(\frac{-x}{y}\right)=e_{\pi^c}.\]
Thus, 
\begin{equation}
\label{eq:order at pi}
\ord_{\pi}\left(\frac{\sigma_\pi(P)}{\sigma_\pi(P^c)} \right)=e_\pi-e_{\pi^c}.
\end{equation}

Let $\alpha\in K^\times$ generate the principal ideal $\pi^{h}$. By \eqref{eq:order at pi} and the definition of the anticyclotomic $p$-adic idele class character, we have
\begin{eqnarray*}
 \rho_\pi\left(\frac{\sigma_\pi(P)}{\sigma_\pi(P^c)} \right)&=& \frac{1}{hp}\log_p\circ\psi\left(\frac{\alpha^{e_{\pi^c}-e_\pi}\sigma_\pi(P)^h}{(\alpha^c)^{e_{\pi^c}-e_\pi}\sigma_\pi(P^c)^h }\right)\\
&=&\frac{1}{p}\log_p\left(\psi\left(\frac{\sigma_\pi(P)}{\sigma_\pi(P^c)}\right)\right)+\frac{1}{hp}\log_p\left(\psi\left(\frac{\alpha}{\alpha^c}\right)^{e_{\pi^c}-e_\pi}\right).
\end{eqnarray*}

Now it remains to show that
\begin{equation}
\label{eq:cancellation}
\sum_{w \nmid p\infty} \rho_w(d_w (P ))=\frac{1}{hp}\log_p\left(\psi\left(\frac{\dd ( P)^c}{\dd ( P)}\right)\right)-\frac{1}{hp}\log_p\left(\psi\left(\frac{\alpha}{\alpha^c}\right)^{e_{\pi^c}-e_\pi}\right).
\end{equation}
By the definition of $\rho$, we have
\begin{equation} \label{awayfromp}
\sum_{w \nmid p\infty} \rho_w(d_w(P))=\frac{1}{h} \sum_{w \nmid p\infty} \rho_w(d_w(P)^h).
\end{equation}
Since $\ord_w(d_w(P)^h)=\ord_w(\dd ( P))$, Lemma \ref{units die} gives $\rho_w(d_w(P)^h)=\rho_w(\dd ( P))$ for every $w\nmid p\infty$. Substituting this into \eqref{awayfromp} gives
\begin{eqnarray*}
\sum_{w \nmid p\infty} \rho_w(d_w(P))&=&\frac{1}{h} \sum_{w \nmid p\infty}\rho_w(\dd ( P))\\
&=&\frac{1}{h} \sum_{w \nmid p\infty}\rho\circ \iota_w(\dd ( P))\\
&=&\frac{1}{h} \rho\Bigl(\prod_{w \nmid p\infty}\iota_w(\dd ( P))\Bigr).
\end{eqnarray*}
Now $\prod_{w \nmid p\infty}\iota_w(\dd ( P))$ is the idele with entry $\dd ( P)$ at every place $w\nmid p\infty$ and entry $1$ at all other places. Define $\beta\in\cO_K$ by $\dd ( P)=\alpha^{e_\pi}(\alpha^c)^{e_{\pi^c}}\beta$. Thus, by Proposition \ref{general character} and Remark \ref{alpha}, we get
\begin{eqnarray*}
\frac{1}{h} \rho\Bigl(\prod_{w \nmid p\infty}\iota_w(\dd ( P))\Bigr)&=&\frac{1}{hp}\log_p\left(\psi\left(\frac{\beta^c}{\beta} \right)\right)\\
&=&\frac{1}{hp}\log_p\left(\psi\left(\frac{\dd ( P)^c}{\dd ( P)} \right)\right)-\frac{1}{hp}\log_p\left(\psi\left(\frac{\alpha}{\alpha^c}\right)^{e_{\pi^c}-e_\pi}\right)
\end{eqnarray*}
as required. This concludes the proof.
\end{proof}

In \cite{MST}, the authors describe the ``universal'' $p$-adic height pairing $(P,Q)\in I(K)$ of two points $P,Q\in E(K)$. Composition of the universal height pairing with any $\Qp$-linear map $\rho:I(K)\rightarrow \Qp $ gives rise to a canonical symmetric bilinear pairing
\[(\ ,\ )_\rho:E(K)\times E(K)\rightarrow \Qp\]
called the \emph{$\rho$-height pairing}. The $\rho$-height of a point $P\in E(K)$ is defined to be $-\frac{1}{2}(P,P)_\rho$. 

Henceforth, we fix $\rho$ to be the anticyclotomic $p$-adic idele class character defined in \S\ref{sec:character}. The corresponding $\rho$-height pairing is referred to as the \emph{anticyclotomic $p$-adic height pairing}, and it is denoted as follows:
\[
\langle\ , \ \rangle=(\ ,\ )_\rho: E(K)\times E(K)\rightarrow \Qp
\]
Observe that
\[
\langle P,Q\rangle=h_\rho(P)+h_\rho(Q)-h_\rho(P+Q).
\]

Let $E(K)^+$ and $E(K)^-$ denote the $+1$-eigenspace and the $-1$-eigenspace, respectively, for the action of complex conjugation on $E(K)$. Since $\sigma_\pi$ is an odd function, using \eqref{simplified height} we see that the anticyclotomic height satisfies 
\[
h_\rho(P)=0 \indent \text{ for all }P\in E(K)^+\cup E(K)^-.
\] 
Therefore, the anticyclotomic $p$-adic height pairing satisfies 
\begin{equation} \label{restriction vanishes}
\langle E(K)^+, E(K)^+\rangle=\langle E(K)^-,E(K)^-\rangle=0.
\end{equation}
Consequently, if $P\in E(K)^+$ and $Q\in E(K)^-$, then 
\begin{align} \label{pairing-height}
\langle P,Q\rangle & =  h_\rho(P)+h_\rho(Q)-h_\rho(P+Q)\\
&= -\frac{1}{2}\langle P,P\rangle -\frac{1}{2}\langle Q,Q\rangle -h_\rho(P+Q) \nonumber \\
&= -h_\rho(P+Q).\nonumber
\end{align}

\bigskip 

\section{The shadow line} \label{sec:Shadow}

Let $E$ be an elliptic curve defined over $\Q$ and $p$ an odd prime of good ordinary reduction. Fix an imaginary quadratic extension $K/\Q$ satisfying the Heegner hypothesis for $E/\Q$ (i.e., all primes dividing the conductor of $E/\Q$ split in $K$). Consider the anticyclotomic $\Zp$-extension $K_\infty$ of $K$. Let $K_n$ denote the subfield of $K_\infty$ whose Galois group over $K$ is isomorphic to $\mathbb{Z}/p^n\mathbb{Z}$. The module of \textit{universal norms} for this $\Zp$-extension is defined as follows:
\[
\cU := \bigcap_{n \geq 0} N_{K_n/K}(E(K_n) \otimes \Zp) \subseteq E(K)\otimes \Zp,
\]
where $N_{K_n/K}$ is the norm map induced by the map $E(K_n) \to E(K)$ given by $P \mapsto \underset{\sigma\in \mathrm{Gal}(K_n/K)}\sum P^\sigma$.

By work of Cornut \cite{Cornut} and Vatsal \cite{Vatsal} we know that for $n$ large enough, we have a non-torsion Heegner point in $E(K_n)$.  Since $p$ is a prime of good ordinary reduction, the trace down to $K_{n-1}$ of the Heegner points defined over $K_n$ is related to Heegner points defined over $K_{n-1}$, see \cite[\S 2]{BCS} for further details. Due to this relation among Heegner points defined over the different layers of $K_\infty$, if the $p$-primary part of the Tate-Shafarevich group of $E/K$ is finite then these points give rise to non-trivial universal norms. Hence, if the $p$-primary part of the Tate-Shafarevich group of $E/K$ is finite then $\cU$ is non-trivial whenever the Heegner hypothesis holds. By \cite{Bertolini}, \cite{CW}, and \cite{Ciperiani} we know that if $\Gal (\Q(E_p)/\Q)$ is not solvable then $\cU\simeq \Zp$.

Consider 
\[
L_K := \cU \otimes \Qp.
\]
If the $p$-primary part of the Tate-Shafarevich group of $E/K$ is finite then $L_K$ is a line in the vector space $E(K) \otimes \Qp$ known as the \emph{shadow line} associated to the triple $(E, K, p)$. The space $E(K) \otimes \Qp$ splits as the direct sum of two eigenspaces under the action of complex conjugation
\[
E(K) \otimes \Qp= E(K)^+ \otimes \Qp \oplus E(K)^- \otimes \Qp. 
\]
Observe that 
\[
E(K)^+ \otimes \Qp= E(\Q) \otimes \Qp \indent\text{and}\indent E(K)^- \otimes \Qp\simeq E^K(\Q) \otimes \Qp, 
\]
where $E^K$ denotes the quadratic twist of $E$ with respect to $K$. Since the module $\cU$ is fixed by complex conjugation, the shadow line $L_K$ lies in one of the eigenspaces:
\[
L_K \subseteq E(\Q) \otimes \Qp \indent\text{or}\indent L_K \subseteq E(K)^- \otimes \Qp.
\]
The assumption of the Heegner hypothesis forces the analytic rank of $E/K$ to be odd, and hence the dimension of $E(K) \otimes \Qp$ is odd by the Parity Conjecture \cite{Nekovar} and our assumption of the finiteness of the $p$-primary part of the Tate-Shafarevich group of $E/K$. Hence, $\dim E(K)^- \otimes \Qp \neq \dim E(\Q) \otimes \Qp$. The Sign Conjecture states that $L_K$ is expected to lie in the eigenspace of higher dimension \cite{MRgrowth}.  

Our main motivating question is the following: 
\begin{question}[Mazur, Rubin] \label{Quest}
Consider an elliptic curve $E/\Q$ of positive even analytic rank $r$, an imaginary quadratic field $K$ such that $E/K$ has analytic rank $r+1$, and a prime $p$ of good ordinary reduction such that the $p$-primary part of the Tate-Shafarevich group of $E/\Q$ is finite. By the Sign Conjecture, we expect $L_K$ to lie in $E(\Q)\otimes \Qp$. As $K$ varies, we presumably get different shadow lines $L_K$. What are these lines and how are they distributed in $E(\Q)\otimes \Qp$? 
\end{question}
 
Note that in the statement of the above question we make use of the following results: 
\begin{enumerate}
\item  Since $E/\Q$ has positive even analytic rank we know that $\dim E(\Q)\otimes \Qp \geq 2$ by work of Skinner-Urban \cite[Theorem 2]{SU} and work of Nekovar \cite{Nekovar} on the Parity Conjecture. \label{Qrank}
\item Since our assumptions on the analytic ranks of $E/\Q$ and $E/K$ imply that the analytic rank of $E^K/\Q$ is $1$, by work of Gross-Zagier \cite{GZ} and Kolyvagin \cite{Kolyvagin} we know that
\begin{enumerate}
\item $\dim E(K)^- \otimes \Qp=1$; \label{Krank}
\item the $p$-primary part of the Tate-Shafarevich group of $E^K/\Q$ is finite, and hence the finiteness of the $p$-primary part of the Tate-Shafarevich group of $E/K$ follows from the finiteness of the $p$-primary part of the Tate-Shafarevich group of $E/\Q$. \label{Sha}
\end{enumerate}
\end{enumerate} 
Thus by \eqref{Sha} we know that $L_K \subseteq E(K)\otimes \Qp$, while \eqref{Qrank} and \eqref{Krank} are the input to the Sign Conjecture.

It is natural to start the study of Question \ref{Quest} by considering elliptic curves $E/\Q$ of analytic rank $2$. In this case, assuming that
\begin{equation}\label{algRank}
\rank_\Z E(\Q) =2,
\end{equation}
we identify $L_K$ in $E(\Q)\otimes \Qp$ by making use of the anticyclotomic $p$-adic height pairing, viewing it as a pairing on $E(K)\otimes \Zp$. This method forces us to restrict our attention to quadratic fields $K$ where $p$ splits. It is known that $\cU$ is contained in the kernel of the anticyclotomic $p$-adic height pairing \cite[Proposition 4.5.2]{MTpairing}.  In fact, in our situation, the properties of this pairing and \eqref{algRank} together with the fact that $\dim E(K)^- \otimes \Qp=1$ imply that either $\cU$ is the kernel of the pairing or the pairing is trivial. Thus computing the anticyclotomic $p$-adic height pairing allows us to verify the Sign Conjecture and determine the shadow line $L_K$. 

In order to describe the lines $L_K$ for multiple quadratic fields $K$, we fix two independent generators $P_1, P_2$ of $E(\Q)\otimes \Qp$ (with $E$ given by its reduced minimal model) and compute the slope of $L_K\otimes \Qp$ in the corresponding coordinate system. For each quadratic field $K$ we compute a non-torsion point $R \in E(K)^-$ (on the reduced minimal model of $E$). The kernel of the anticyclotomic $p$-adic height pairing on $E(K)\otimes \Zp$ is generated by $aP_1+bP_2$ for $a,b\in\Zp$ such that $\langle aP_1+bP_2, R\rangle=0$. Then by \eqref{pairing-height} the shadow line $L_K\otimes \Qp$ in $E(\Q)\otimes \Qp$ is generated by $h_\rho(P_2+R) P_1-h_\rho(P_1+R) P_2$ and its slope with respect to the coordinate system induced by $\{P_1, P_2\}$ equals   
\[
-h_\rho(P_1+R)/ h_\rho(P_2+R).
\]

 \bigskip


\section{Algorithms} \label{Algs}

Let $E/\Q$ be an elliptic curve of analytic rank $2$; see \cite[Chapter 4]{Bradshaw} for an algorithm that can provably verify the non-triviality of the second derivative of the $L$-function. Our aim is to compute shadow lines on the elliptic curve $E$. In order to do this using the method described in \S \ref{sec:Shadow} we need to 
\begin{itemize}
\item verify that $\rank_\Z E(\Q)=2$, and 
\item compute two $\Z$-independent points $P_1, P_2 \in E(\Q)$. 
\end{itemize}

By work of Kato \cite[Theorem 17.4]{Kato}, computing the $\ell$-adic analytic rank of $E/\Q$ for any prime $\ell$ of good ordinary reduction gives an upper bound on $\rank_\Z E(\Q)$  (see  \cite[Proposition 10.1]{SW}). Using the techniques in \cite[$\S 3$]{SW}, which have been implemented in \texttt{Sage}, one can compute an upper bound on the $\ell$-adic analytic rank using an approximation of the $\ell$-adic $L$-series, thereby obtaining an upper bound on $\rank_\Z E(\Q)$. Since the analytic rank of $E/\Q$ is $2$, barring the failure of standard conjectures we find that $\rank_\Z E(\Q) \leq 2$. Then using work of Cremona \cite[Section 3.5]{Cremona} implemented in \texttt{Sage}, we search for points of bounded height, increasing the height until we find two $\Z$-independent points $P_1, P_2\in E(\Q)$. We have thus computed a basis of $E(\Q)\otimes \Qp$. 

We will now proceed to describe the algorithms that allow us to compute shadow lines on the elliptic curve $E/\Q$.

\bigskip

\begin{algorithm}\label{Kgenerator} Generator of $E(K)^-\otimes \Qp$.\\

\textit{Input}:  
\begin{itemize}
\item an elliptic curve $E/\Q$ (given by its reduced minimal model) of analytic rank $2$,
\item an odd prime $p$ of good ordinary reduction; 
\item an imaginary quadratic field $K$ such that 
\begin{itemize}
\item the analytic rank of $E/K$ equals $3$, and 
\item all rational primes dividing the conductor of $E/\Q$ split in $K$.    
\end{itemize}
\end{itemize}
\textit{Output}:  A generator of $E(K)^-\otimes \Qp$ (given as a point on the reduced minimal model of $E/\Q$).\\
\begin{enumerate}
\item Let $d\in \Z$ such that $K= \Q(\sqrt d)$. Compute a short model of $E^K$, of the form $y^2 = x^3 + ad^2x + bd^3$. 
\item Our assumption on the analytic ranks of $E/\Q$ and $E/K$ implies that the analytic rank of $E^K/\Q$ is $1$. Compute a non-torsion point\footnote{Note that by \cite{GZ} and \cite{Kolyvagin}  the analytic rank of $E^K/\Q$ being $1$ implies that the algebraic rank of $E^K/\Q$ is $1$ and the Tate-Shafarevich group of $E^K/\Q$ is finite. Furthermore, in this case, computing a non-torsion point in $E^K(\Q)$ can be done by choosing an auxiliary imaginary quadratic field $F$ satisfying the Heegner hypothesis for $E^K/\Q$ such that the analytic rank of $E^K/F$ is $1$ and computing the corresponding basic Heegner point in $E^K(F )$.} of $E^K(\Q)$ and denote it $(x_0,y_0)$. Then $(\frac{x_0}{d}, \frac{y_0\sqrt{d}}{d^2})$ is an element of $E(K)$ on the model $y^2 = x^3 + ax + b$.
\item Output the image of $(\frac{x_0}{d}, \frac{y_0\sqrt{d}}{d^2})$ on the reduced minimal model of $E$.
\end{enumerate}
\end{algorithm}

\bigskip

\begin{algorithm} \label{height} Computing the anticyclotomic $p$-adic height associated to $(E,K,p)$.\\
\textit{Input}: 
\begin{itemize}
\item elliptic curve $E/\Q$ (given by its reduced minimal model);
\item an odd prime $p$ of good ordinary reduction; 
\item an imaginary quadratic field $K$ such that $p$ splits in $K/\Q$;
\item a non-torsion point $P\in E(K)$.
\end{itemize}
\textit{Output}: The anticyclotomic $p$-adic height of $P$.\\

\begin{enumerate}

\item Let $p\cO_K= \pi \pi^c$. Fix an identification $\psi: K_{\pi} \simeq \Qp$. In particular, $v_p(\psi(\pi)) = 1$. 

\item Let $m_0 = \lcm \{c_{\ell}\}$, where $\ell$ runs through the primes of bad reduction for $E/\Q$ and $c_{\ell}$ is the Tamagawa number at $\ell$. Compute\footnote{Note that Step \ref{bad-red} and Step \ref{p-reduction} are needed to ensure that the point whose anticyclotomic $p$-adic height we will compute using formula \eqref{eq:height} satisfies the required conditions. } $R = m_0 P$. \label{bad-red}

\item Determine the smallest positive integer $n$ such that $nR$ and $nR^c$ reduce to $0\in E(\F_p)$ modulo $\pi$. Note that $n$ is a divisor of $\#E(\F_p)$.  Compute\ $T= nR.$ \label{p-reduction}

\item Compute $\dd( R) \in \cO_K$ defined in \eqref{denom-replacement} as a generator of the ideal 
\[
\prod_{\mathfrak{q}}{\mathfrak{q}^{h\, \ord_{\mathfrak{q}}(\delta( R))/2}} 
\]
where $h$ is the class number of $K$, the product is over all prime ideals $\mathfrak{q}$ of $\cO_K$, and $\delta ( R)$ is the denominator ideal of $x(R )\cO_K$.

\item Let $f_n$ denote the $n$th division polynomial associated to $E$. Compute $\dd(T) = \dd(nR) = f_n(R )^h \dd(R )^{n^2}$.  
Note that by Step \eqref{bad-red} and Proposition 1 of Wuthrich \cite{Wuthrich} we see that $f_n(R )^h \dd(R )^{n^2} \in \cO_K$ since $\dd(T)$ is an element of $K$ that is integral at every finite prime.

\item Compute $\sigma_{\pi}(t) := \sigma_p(t)$ as a formal power series in $t\Zp[[t]]$ with sufficient precision. This equality holds since our elliptic curve $E$ is defined over $\Q$.

\item We use Proposition \ref{packaging} to determine the anticyclotomic $p$-adic height of $T$: compute  
\begin{align*}
h_\rho(T)&= \frac{1}{p}\log_p\left(\psi\left(\frac{\sigma_\pi(T)}{\sigma_\pi(T^c)} \right)\right)+\frac{1}{hp}\log_p\left(\psi\left(\frac{\dd(T )^c}{\dd( T)}\right)\right)\\
&=  \frac{1}{p}\log_p\left(\psi\left(\frac{\sigma_p\left(\frac{-x(T)}{y(T)}\right)}{\sigma_p\left(\frac{-x(T)^c}{y(T)^c}\right)}\right)\right) + \frac{1}{hp}\log_p\left(\psi\left(\frac{\dd(T )^c}{\dd( T)}\right)\right)\\
&= \frac{1}{p}\log_p
\left( \frac {\sigma_p\left(\psi\left(\frac{-x(T)}{y(T)}\right)\right)} {\sigma_p\left(\psi\left(\frac{-x(T)^c}{y(T)^c}\right)\right)}\right) 
+ \frac{1}{hp}\log_p\left(\psi\left(\frac{\dd(T )^c}{\dd( T)}\right)\right).\\
\end{align*}

\item Output the anticyclotomic $p$-adic height of $P$: compute\footnote{As a consistency check we compute the height of $nP$ and verify that $h_{\rho}(nP) = \frac{1}{n^2}h_{\rho}( P)$ for positive integers $n\leq 5$.} 
\[
h_\rho( P) = \frac{1}{n^2m_0^2}h_\rho(T).
\]

\end{enumerate}
\end{algorithm}

\bigskip

\begin{algorithm} \label{shadow} Shadow line attached to $(E,K,p)$.\\
\textit{Input}: 
\begin{itemize}
\item an elliptic curve $E/\Q$ (given by its reduced minimal model) of analytic rank $2$ such that $\mathrm {rank}_\Z E(\Q) =2$;
\item an odd prime $p$ of good ordinary reduction such that the $p$-primary part of the Tate-Shafarevich group of $E/\Q$ is finite; 
\item two $\Z$-independent points $P_1, P_2 \in E(\Q)$; 
\item an imaginary quadratic field $K$ such that 
\begin{itemize}
\item the analytic rank of $E/K$ equals $3$, and 
\item $p$ and all rational primes dividing the conductor of $E/\Q$ split in $K$.    
\end{itemize}
\end{itemize}
\textit{Output}: The slope of the shadow line $L_K \subseteq E(\Q)\otimes\Q_p$ with respect to the coordinate system induced by $\{P_1, P_2\}$. \\

\begin{enumerate}
\item Use Algorithm \ref{Kgenerator} to compute a non-torsion point $S \in E(K)^-$. We then have generators $P_1, P_2, S$ of $E(K)\otimes \Qp$ such that $P_1, P_2 \in E(\Q)$ and $S \in E(K)^-$ (given as points on the reduced minimal model of $E/\Q$) . 

\item Compute $P_1 + S$ and $P_2 +S$.

\item Use Algorithm~\ref{height} to compute\footnote{We compute the height of $P_1 + P_2 + S$  as a consistency check.} the anticyclotomic $p$-adic heights: $h_\rho(P_1+ S)$ and $h_\rho(P_2+S)$. 
Finding that at least one of these heights is non-trivial implies that the shadow line associated to $(E,K,p)$ lies in $E(\Q)\otimes \Q_p$, i.e., the Sign Conjecture holds for $(E, K, p)$.
\item The point $h_\rho(P_2+S) P_1-h_\rho(P_1+S) P_2$ is a generator of the shadow line associated to $(E,K,p)$.  Output the slope of the shadow line $L_K \subseteq E(\Q)\otimes\Q_p$ with respect to the coordinate system induced by $\{P_1, P_2\}$: compute
\[
-h_\rho(P_1+S)/ h_\rho(P_2+S) \in \Q_p.
\]
\end{enumerate}
\end{algorithm}

\bigskip

\section{Examples}\label{Examples}

Let $E$ be the elliptic curve $\mlq\mlq 389.a1\mrq\mrq$ \cite[\href{http://www.lmfdb.org/EllipticCurve/Q/389.a1}{Elliptic Curve 389.a1}]{lmfdb} given by the model 
\[
y^2 + y = x^3 + x^2 - 2x.
\]
We know that the analytic rank of $E/\Q$ equals $2$ \cite[\S6.1]{Bradshaw} and $\rank_\Z E(\Q) = 2$, see \cite{Cremona}. In addition, $5$ and $7$ are good ordinary primes for $E$.  We find two $\Z$-independent points
\[
P_1 = (-1,1), P_2 = (0,0) \in E(\Q).
\] 
We will now use the algorithms described in \S\ref{Algs} to compute the slopes of two shadow lines on $E(\Q)\otimes \Q_5$ with respect to the coordinate system induced by $\{P_1, P_2\}$.

\bigskip
\subsection{Shadow line attached to $\big(\mlq\mlq 389.a1\mrq\mrq, \Q(\sqrt{-11}),5\big)$}\label{ex1}

The imaginary quadratic field $K = \Q(\sqrt{-11})$ satisfies the Heegner hypothesis for $E$ and the quadratic twist $E^K$ has analytic rank $1$.  Moreover, the prime $5$  splits in $K$.

We use Algorithm \ref{Kgenerator} to find a non-torsion point $S = (\frac{1}{4} , \frac{1}{8}\sqrt{-11} - \frac{1}{2}) \in E(K)^-$. We now proceed to compute the anticyclotomic $p$-adic heights of $P_1 + S$ and $P_2 + S$ which are needed to determine the slope of the shadow line associated to the triple $\big(\mlq\mlq389.a1\mrq\mrq, \Q(\sqrt{-11}),5\big)$.
We begin by computing 
\begin{align*}
A_1 &:= P_1 + S = \left(-\frac{6}{25}\sqrt{-11} + \frac{27}{25}, -\frac{62}{125}\sqrt{-11} + \frac{29}{125}\right),\\
A_2 &:= P_2 + S = (-2\sqrt{-11}, -4\sqrt{-11}- 12).
\end{align*}

We carry out the steps of Algorithm \ref{height} to compute $h_{\rho}(A_1)$:
\begin{enumerate}
\item Let $5 \cO_K = \pi \pi^c$, where $\pi = (\frac{1}{2}\sqrt{-11} + \frac{3}{2})$ and  $\pi^c = (-\frac{1}{2}\sqrt{-11} + \frac{3}{2})$. This allows us to fix an identification  
\[
\psi: K_{\pi} \rightarrow \Q_5
\]
that sends 
\[
\frac{1}{2}\sqrt{-11} + \frac{3}{2} \mapsto 2 \cdot 5 + 5^{2} + 3 \cdot 5^{3} + 4 \cdot 5^{4} + 4 \cdot 5^{5} + 3 \cdot 5^{7} + 5^{8} + 5^{9} + O(5^{10}).
\]
\item  Since the Tamagawa number at 389 is trivial, i.e., $c_{389} = 1$, we have $m_0 = 1$. Thus $R = A_1$. 
\item  We find that $n = 9$ is the smallest multiple of $R$ and $R^c$ such that both points reduce to $0$ in $E(\cO_K/\pi)$. Set $T = 9R$.
\item Note that the class number of $K$ is $h = 1$. We find $\dd( R) = \frac{1}{2} \sqrt{-11} - \frac{3}{2}$.
\item Let $f_9$ denote the $9$th division polynomial associated to $E$. We compute 
\begin{align*}
\dd(T) &= \dd(9R) \\
&= f_9(R ) \dd(R )^{9^2}\\
&= 24227041862247516754088925710922259344570 \sqrt{-11} \\
&\qquad - 147355399895912034115896942557395263175125.
\end{align*}

\item We compute 
\begin{align*}\sigma_{\pi}(t) :&= \sigma_5(t)\\
&=t + \left(4 + 5 + 3 \cdot 5^{2} + 5^{3} + 2 \cdot 5^{4} + 3 \cdot 5^{5} + 2 \cdot 5^{6} + O(5^{8})\right)t^{3} \\
&\quad+ \left(3 + 2 \cdot 5 + 2 \cdot 5^{2} + 2 \cdot 5^{3} + 2 \cdot 5^{4} + 2 \cdot 5^{5} + 2 \cdot 5^{6} + O(5^{7})\right)t^{4} \\
&\quad+ \left(1 + 5 + 5^{2} + 5^{3} + 3 \cdot 5^{4} + 3 \cdot 5^{5} + O(5^{6})\right)t^{5}\\
&\quad + \left(4 + 2 \cdot 5 + 2 \cdot 5^{2} + 2 \cdot 5^{3} + 3 \cdot 5^{4} + O(5^{5})\right)t^{6}\\
&\quad + \left(4 + 3 \cdot 5 + 4 \cdot 5^{2} + O(5^{4})\right)t^{7} + \left(3 + 3 \cdot 5^{2} + O(5^{3})\right)t^{8}\\
&\quad + \left(3 \cdot 5 + O(5^{2})\right)t^{9} + \left(2 + O(5)\right)t^{10} + O(t^{11}).
\end{align*} 

\item We use Proposition \ref{packaging} to determine the anticyclotomic $p$-adic height of $T$: we compute  
\begin{align*}
h_\rho(T)&= \frac{1}{p}\log_p\left(\psi\left(\frac{\sigma_\pi(T)}{\sigma_\pi(T^c)} \right)\right)+\frac{1}{hp}\log_p\left(\psi\left(\frac{\dd(T )^c}{\dd( T)}\right)\right)\\
&= \frac{1}{p}\log_p
\left( \frac {\sigma_p\left(\psi\left(\frac{-x(T)}{y(T)}\right)\right)} {\sigma_p\left(\psi\left(\frac{-x(T)^c}{y(T)^c}\right)\right)}\right) 
+ \frac{1}{hp}\log_p\left(\psi\left(\frac{\dd(T )^c}{\dd( T)}\right)\right)\\
&= 3 + 5 + 5^{2} + 4 \cdot 5^{4} + 3 \cdot 5^{5} + 4 \cdot 5^{7} + 3 \cdot 5^{8} + 5^{9} +O(5^{10}).
\end{align*}

\item We output the anticyclotomic $p$-adic height of $A_1$: 
\begin{align*}
h_\rho(A_1) &= \frac{1}{9^2}h_\rho(T) \\
&=  3 + 3 \cdot 5 + 3 \cdot 5^{2} + 2 \cdot 5^{4} + 4 \cdot 5^{5} + 4 \cdot 5^{6} + 3 \cdot 5^{8} + O(5^{10}).
\end{align*}
\end{enumerate}

Repeating Steps (1) -- (8) for $A_2$ yields
\[
h_{\rho}(A_2) = 3 + 2 \cdot 5 + 4 \cdot 5^{2} + 2 \cdot 5^{5} + 5^{6} + 4 \cdot 5^{7} + 4 \cdot 5^{9} +O(5^{10}).
\]

As a consistency check, we also compute 
\[
h_\rho(P_1 + P_2 + S) = 1 + 5 + 3 \cdot 5^{2} + 5^{3} + 2 \cdot 5^{4} + 5^{5} + 5^{6} + 4 \cdot 5^{8} + 4 \cdot 5^{9} + O(5^{10}).
\]
Observe that, numerically, we have 
\[
h_\rho(P_1 + P_2 + S)  = h_\rho(P_1 + S) + h_\rho(P_2 + S).
\]

The slope of the shadow line $L_K \subseteq E(\Q)\otimes\Q_p$ with respect to the coordinate system induced by $\{P_1, P_2\}$ is thus
\[
-\frac{h_\rho(P_1+S)}{h_\rho(P_2+S)}=4 + 2 \cdot 5 + 5^{2} + 3 \cdot 5^{3} + 5^{4} + 5^{6} + 5^{7} + O(5^{10}).
\]

\bigskip

\subsection{Shadow line attached to $\big(\mlq\mlq 389.a1\mrq\mrq, \Q(\sqrt{-24}),5\big)$}

Consider the imaginary quadratic field $K = \Q(\sqrt{-24})$. Note that $K$ satisfies the Heegner hypothesis for $E$, the twist $E^K$ has analytic rank $1$, and the prime $5$ splits in $K$.

Using Algorithm \ref{Kgenerator} we find a non-torsion point $S = \left(\frac{1}{2} ,\frac{1}{8} \sqrt{-24} - \frac{1}{2}\right)  \in E(K)^-$. We then compute
\begin{align*}P_1 + S &= \left(-\frac{1}{6} \sqrt{-24} + \frac{1}{3}, -\frac{5}{18} \sqrt{-24} - 1\right)\\
P_2 + S &= \left(-\frac{1}{2} \sqrt{-24} - 2, -6\right).\end{align*}

Many of the steps taken to compute $h_\rho(P_1 + S)$ and $h_\rho(P_2 + S)$ are quite similar to those in $\S\ref{ex1}$. One notable difference is that in this example the class number $h$ of $K$ is equal to $2$. We find that 
\begin{align*}
h_\rho(P_1 + S) &= 4 + 2 \cdot 5 + 3 \cdot 5^{4} + 2 \cdot 5^{5} + 4 \cdot 5^{6} + 2 \cdot 5^{7} + 5^{8} + 2 \cdot 5^{9} + O(5^{10}),\\
h_\rho(P_2 + S ) &=1 + 5 + 5^{3} + 5^{5} + 2 \cdot 5^{6} + 4 \cdot 5^{7} + 2 \cdot 5^{8} + 3 \cdot 5^{9} + O(5^{10}).\\
\end{align*}
In addition, we compute $h_\rho(P_1 + P_2 + S)$ and verify that
\begin{align*}
h_\rho(P_1 + P_2 + S) &= 4 \cdot 5 + 5^{3} + 3 \cdot 5^{4} + 3 \cdot 5^{5} + 5^{6} + 2 \cdot 5^{7} + 4 \cdot 5^{8} + O(5^{10})\\
&= h_\rho(P_1 +S) + h_\rho(P_2 + S).
\end{align*}
This gives that the slope of the shadow line $L_K \subseteq E(\Q)\otimes\Q_p$ with respect to the coordinate system induced by $\{P_1, P_2\}$ is
\[
-\frac{h_\rho(P_1+S)}{h_\rho(P_2+S)}= 1 + 5 + 3 \cdot 5^{2} + 3 \cdot 5^{5} + 3 \cdot 5^{6} + 3 \cdot 5^{7} + 2 \cdot 5^{8} + 5^{9} +O(5^{10}).
\]

\subsection{Summary of results of additional computations of shadow lines}

The algorithms developed in \S \ref{Algs} enable us to compute shadow lines in many examples which is what is needed to initiate a study of Question \ref{Quest}. We will now list some results of additional computations of slopes of shadow lines on the elliptic curve $\mlq\mlq 389.a1\mrq\mrq$. In the following two tables we fix the prime $p=5, 7$ respectively, and vary the quadratic field.

\smallskip
\begin{table}[h]
\begin{center}
\caption{Slopes of shadow lines for $\big(\mlq\mlq 389.a1\mrq\mrq,K, 5\big)$}
 \begin{tabular}{||c |  l||}
    \hline
 $K$  & slope  \\ \hline
 $\Q(\sqrt{-11})$ & $4 + 2 \cdot 5 + 5^{2} + 3 \cdot 5^{3} + 5^{4} + 5^{6} + 5^{7} + O(5^{10})$ \\
 $\Q(\sqrt{-19})$ & $1 + 4 \cdot 5 + 2 \cdot 5^{2} + 2 \cdot 5^{3} + 2 \cdot 5^{5} + 5^{6} + 4 \cdot 5^{7} + 3 \cdot 5^{8} + 4 \cdot 5^{9} + O(5^{10})$ \\
 $\Q(\sqrt{-24})$ & $1 + 5 + 3 \cdot 5^{2} + 3 \cdot 5^{5} + 3 \cdot 5^{6} + 3 \cdot 5^{7} + 2 \cdot 5^{8} + 5^{9} + O(5^{10})$ \\
 $\Q(\sqrt{-59})$ & $4 + 5 + 4 \cdot 5^{2} + 5^{3} + 2 \cdot 5^{4} + 2 \cdot 5^{5} + 3 \cdot 5^{7} + 4 \cdot 5^{8} + 2 \cdot 5^{9} + O(5^{10})$\\
 $\Q(\sqrt{ -79})$ & $2 + 5 + 2 \cdot 5^{2} + 2 \cdot 5^{3} + 4 \cdot 5^{4} + 4 \cdot 5^{5} + 3 \cdot 5^{6} + 3 \cdot 5^{7} + 3 \cdot 5^{8} + 2 \cdot 5^{9} + O(5^{10})$\\
 $\Q(\sqrt{ -91})$ & $4 + 3 \cdot 5 + 5^{2} + 5^{4} + 2 \cdot 5^{5} + 4 \cdot 5^{6} + 5^{7} + 2 \cdot 5^{9} + O(5^{10})$\\
 $\Q(\sqrt{ -111})$ & $5^{-2} + 4 \cdot 5^{-1} + 4 + 4 \cdot 5 + 2 \cdot 5^{2} + 2 \cdot 5^{3} + 4 \cdot 5^{4} + 2 \cdot 5^{5} + 3 \cdot 5^{6} + 5^{7} + 2 \cdot 5^{8} + 5^{9} + O(5^{10})$\\
 $\Q(\sqrt{ -119})$ & $4 \cdot 5^{-1} + 2 + 2 \cdot 5 + 2 \cdot 5^{2} + 4 \cdot 5^{3} + 4 \cdot 5^{4} + 2 \cdot 5^{5} + 5^{6} + 4 \cdot 5^{7} + 4 \cdot 5^{8} + 4 \cdot 5^{9} + O(5^{10})$\\
 $\Q(\sqrt{ -159})$ & $2 \cdot 5 + 4 \cdot 5^{4} + 4 \cdot 5^{5} + 5^{6} + 5^{7} + 4 \cdot 5^{8} + 5^{9} + O(5^{10})$\\
 $\Q(\sqrt{ -164})$ & $3 + 2 \cdot 5 + 4 \cdot 5^{2} + 5^{3} + 4 \cdot 5^{4} + 3 \cdot 5^{5} + 3 \cdot 5^{6} + 3 \cdot 5^{8} + 4 \cdot 5^{9} + O(5^{10})$\\
 \hline
 \end{tabular}
 \end{center}
 \end{table}
 
\smallskip
\begin{table}[h]
\begin{center}
\caption{Slopes of shadow lines for $\big(\mlq\mlq 389.a1\mrq\mrq,K, 7\big)$}

 \begin{tabular}{||c |  l||}
    \hline
 $K$  & slope  \\ \hline
 $\Q(\sqrt{-19})$ & $3 + 2 \cdot 7 + 2 \cdot 7^{2} + 3 \cdot 7^{3} + 7^{4} + 7^{5} + 4 \cdot 7^{7} + 6 \cdot 7^{9} + O(7^{10})$\\
 $\Q(\sqrt{ -20})$ & $1 + 5 \cdot 7 + 6 \cdot 7^{2} + 6 \cdot 7^{3} + 2 \cdot 7^{4} + 3 \cdot 7^{5} + 3 \cdot 7^{6} + 3 \cdot 7^{7} + O(7^{10})$ \\
 $\Q(\sqrt{ -24})$ & $1 + 3 \cdot 7 + 3 \cdot 7^{2} + 2 \cdot 7^{3} + 6 \cdot 7^{4} + 2 \cdot 7^{5} + 2 \cdot 7^{6} + 6 \cdot 7^{7} + 2 \cdot 7^{8} + O(7^{10})$\\
 $\Q(\sqrt{ -52})$ & $1 + 5 \cdot 7 + 7^{2} + 3 \cdot 7^{3} + 3 \cdot 7^{4} + 2 \cdot 7^{5} + 5 \cdot 7^{6} + 3 \cdot 7^{9} + O(7^{10})$\\
 $\Q(\sqrt{ -55})$ & $1 + 7 + 6 \cdot 7^{2} + 3 \cdot 7^{3} + 5 \cdot 7^{4} + 3 \cdot 7^{5} + 7^{7} + 4 \cdot 7^{9} + O(7^{10})$ \\
 $\Q(\sqrt{ -59})$ & $2 + 7 + 3 \cdot 7^{2} + 3 \cdot 7^{3} + 5 \cdot 7^{4} + 5 \cdot 7^{5} + 2 \cdot 7^{6} + 4 \cdot 7^{7} + 7^{8} + 6 \cdot 7^{9} + O(7^{10})$ \\
 $\Q(\sqrt{ -68})$ & $4 + 4 \cdot 7 + 2 \cdot 7^{3} + 5 \cdot 7^{4} + 5 \cdot 7^{5} + 7^{6} + 7^{7} + 5 \cdot 7^{8} + 5 \cdot 7^{9} + O(7^{10})$ \\
 $\Q(\sqrt{ -87})$ & $3 \cdot 7 + 4 \cdot 7^{2} + 7^{3} + 2 \cdot 7^{4} + 2 \cdot 7^{5} + 7^{6} + 5 \cdot 7^{7} + 7^{9} + O(7^{10})$ \\
 $\Q(\sqrt{ -111})$ & $7^{-2} + 2 \cdot 7^{-1} + 5 + 2 \cdot 7 + 7^{2} + 2 \cdot 7^{3} + 6 \cdot 7^{4} + 5 \cdot 7^{5} + 7^{6} + 2 \cdot 7^{7} + 2 \cdot 7^{9} + O(7^{10})$\\
 $\Q(\sqrt{ -143})$ & $5 + 5 \cdot 7 + 2 \cdot 7^{3} + 4 \cdot 7^{4} + 3 \cdot 7^{5} + 3 \cdot 7^{6} + 2 \cdot 7^{7} + 2 \cdot 7^{9} + O(7^{10})$ \\

 \hline 
 \end{tabular}
 \end{center}
 \end{table}

\end{document}